\numberwithin{equation}{section}
\newtheorem{Theorem}{Theorem}[section]
\newtheorem{Conjecture}[Theorem]{Conjecture}
\newtheorem{Lemma}[Theorem]{Lemma}
 { \theoremstyle{definition}
\newtheorem{Example}[Theorem]{Example}
\newtheorem{Remark}[Theorem]{Remark} }
\definecolor{myred}{rgb}{0.75,0,0}
\definecolor{mygreen}{rgb}{0,0.5,0}
\definecolor{myblue}{rgb}{0,0,0.65}
 \def\hg{{\mathfrak h}} 
 \def\ig{{\mathfrak i}}
  \def\QM{{\mathbb{Q}}}
  \def\RM{{\mathbb{R}}}
  \def\ZM{{\mathbb{Z}}}
\def\AC{{\mathcal{A}}}
\def\HC{{\mathcal{H}}}
\def\MC{{\mathcal{M}}}
\def\RC{{\mathcal{R}}}
\def\AS{{\EuScript A}}
\def\a{\alpha}
\def\g{\gamma}
\def\l{\lambda}
\newcommand{\nc}{\newcommand} \newcommand{\renc}{\renewcommand}
\newcommand{\rdots}{\mathinner{ \mkern1mu\raise1pt\hbox{.}
 \mkern2mu\raise4pt\hbox{.}
 \mkern2mu\raise7pt\vbox{\kern7pt\hbox{.}}\mkern1mu}}
\def\un{\underline}
\def\p{{}^p}
\def\to{\rightarrow}
\def\longto{\longrightarrow}
\nc{\triright}{\stackrel{[1]}{\to}}
\nc{\longtriright}{\stackrel{[1]}{\longto}}
\nc{\Hb}{H^\bullet}
\nc{\Br}{\mathcal{B}}
\nc{\HotRR}{{}_R\mathcal{K}_R}
\nc{\HotR}{\mathcal{K}_R}
\nc{\excise}[1]{}
\nc{\defect}{\text{df}}
\nc{\h}[1]{\underline{H}_{#1}}
\nc{\Ga}{\mathbb{G}_a} 
\nc{\Gm}{\mathbb{G}_m} 
\nc{\Perv}{{\mathbf{P}}}
\nc{\IH}{{\mathrm{IH}}}
\nc{\ic}{\mathbf{IC}}
\nc{\gl}{{\mathfrak{gl}}}
\renc{\sl}{{\mathfrak{sl}}}
\renc{\sp}{{\mathfrak{sp}}}
\renc{\Im}{\textrm{Im}}
\nc{\HBM}{H^{BM}}
\DeclareMathOperator{\id}{id}
\def\fW{{}^fW}
\nc{\simto}{\stackrel{\sim}{\to}}
\newlength{\classpageheight} \setlength{\classpageheight}{\pdfpageheight}
\newlength{\classpagewidth} \setlength{\classpagewidth}{\pdfpagewidth}
\def\SL{\textrm{SL}}
\def\GL{\textrm{GL}}
\def\Chi{\mathscr{X}}
\def\int{\textrm{int}}
\def\curlabel{\textrm{CurrentLabel}}
\def\curdir{\textrm{CurrentDirection}}
\def\curpoint{\textrm{CurrentPoint}}
\def\cursign{\textrm{CurrentSign}}
\def\start{\textrm{start}}
\def\new{\textrm{new}}
\def\wall{\textrm{wall}}
\def\Wf{{W_f}}
\nc{\He}{\mathrm{H}} 
\nc{\Hee}{\mathrm{H}^\mathrm{ext}} 
\nc{\Zvv}{\mathbb{Z}[v]}
\nc{\Zv}{\mathbb{Z}\big[v^{\pm 1}\big]}
\nc{\AntiS}{\mathrm{AS}}
\nc{\ASCat}{\mathcal{AS}}
\def\sgn{{\mathrm{sgn}}}
\def\f{{}^f}
\def\p{{}^p}
\nc{\Wa}{\mathcal{W}}
\nc{\Sa}{\mathcal{S}}
\nc{\Sf}{S_f}
\nc{\pdot}{ \bullet_p}
\begin{document}
\allowdisplaybreaks

\newcommand{\arXivNumber}{1703.05898}

\renewcommand{\thefootnote}{}

\renewcommand{\PaperNumber}{015}

\FirstPageHeading

\ShortArticleName{Billiards and Tilting Characters for ${\rm SL}_3$}

\ArticleName{Billiards and Tilting Characters for $\boldsymbol{{\rm SL}_3}$\footnote{This paper is a~contribution to the Special Issue on the Representation Theory of the Symmetric Groups and Related Topics. The full collection is available at \href{https://www.emis.de/journals/SIGMA/symmetric-groups-2018.html}{https://www.emis.de/journals/SIGMA/symmetric-groups-2018.html}}}

\Author{George LUSZTIG~$^\dag$ and Geordie WILLIAMSON~$^\ddag$}

\AuthorNameForHeading{G.~Lusztig and G.~Williamson}

\Address{$^\dag$~Massachusetts Institute of Technology, Cambridge, MA, USA}
\EmailD{\href{mailto:gyuri@math.mit.edu}{gyuri@math.mit.edu}}
\URLaddressD{\url{http://www-math.mit.edu/~gyuri/}}

\Address{$^\ddag$~Sydney University, Sydney, NSW, Australia}
\EmailD{\href{mailto:g.williamson@sydney.edu.au}{g.williamson@sydney.edu.au}}
\URLaddressD{\url{http://www.maths.usyd.edu.au/u/geordie/}}

\ArticleDates{Received July 18, 2017, in f\/inal form February 16, 2018; Published online February 21, 2018}

\Abstract{We formulate a conjecture for the second generation characters of indecomposable tilting modules for~${\rm SL}_3$. This gives many new conjectural decomposition numbers for symmetric groups. Our conjecture can be interpreted as saying that these characters are governed by a discrete dynamical system (``billiards bouncing in alcoves''). The conjecture implies that decomposition numbers for symmetric groups display (at least)
exponential growth.}

\Keywords{tilting modules; billiards; $p$-canonical basis; symmetric group}

\Classification{20C20; 17B10; 20C30}

\renewcommand{\thefootnote}{\arabic{footnote}}
\setcounter{footnote}{0}

\section{Introduction}

We formulate a conjecture for the second generation characters of indecomposable tilting mo\-du\-les for $\SL_3$ in characteristic $p > 2$. These conjectures resulted from our attempts to understand data~\cite{WData} obtained following a 9 month calculation in magma on a supercomputer at the MPIM in Bonn. These results go far beyond existing calculations and are obtained using a new algorithm~\cite{WSym}. The algorithm relies in an essential way on ideas of Libedinsky, Riche and the second author (see~\cite{LiW, RW}). The behaviour we observe appears highly non-trivial, which suggests that proving anything might be dif\/f\/icult.

On the next page the reader will f\/ind a picture. This picture was obtained by analyzing the output of computer calculations for $p = 5$. Exactly how this picture is used to produce (second generation) tilting characters will be explained in the f\/inal section. Before reading the rest of the paper
the reader is invited to consider this picture and try to discern any patterns. This paper is an attempt to explain this picture, as well as similar pictures for $p = 3$ and $7$.

\begin{figure}\centering
 \includegraphics[angle=90,width=135mm]{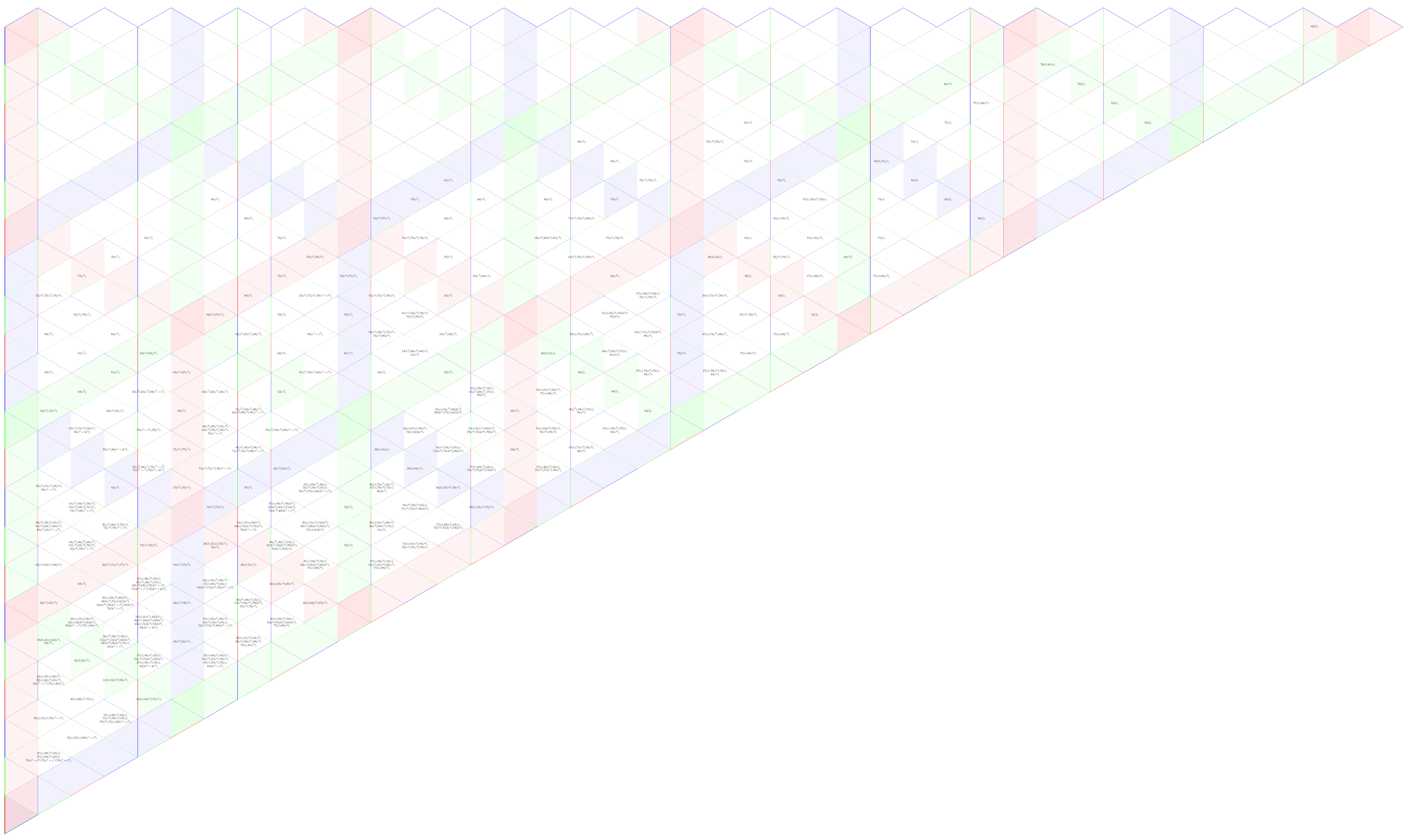}
\caption{Second generation pattern for $p = 5$ up to $i = 81$.} \label{fig:p5}
\end{figure}

\section{Generational philosophy}

Let $G$ denote a split simple and simply connected algebraic group over a f\/ield $\Bbbk$ of characteristic~$p$. We f\/ix a Borel subgroup and maximal torus $T \subset B \subset G$. We will try to follow the notation of~\cite{WTakagi}. In particular:
\begin{center}
\begin{tabular}{rl}
$\Chi$, $\Chi_+$: & weights, dominant weights; \\
$\Phi$, $\Phi_+$: & roots, positive roots; \\
$\Phi^\vee$, $\Phi_+^\vee$: &coroots, positive coroots; \\
$\Sigma$, $\Sigma^\vee$: & simple roots, simple coroots; \\
$\Chi_{<p}$: & $p$-restricted weights; \\
$\rho$, $\a_0^\vee$: & half-sum of $\Phi_+$, highest short coroot; \\
$\Wf$, $\Wa$: & f\/inite Weyl group, af\/f\/ine Weyl group; \\
$\Sf$, $\Sa$: & f\/inite simple ref\/lections, af\/f\/ine simple ref\/lections; \\
$\f\Wa$: & minimal coset representatives for $\Wf \setminus \Wa$; \\
$\pdot$, $h$: & $p$-dilated dot action of $\Wa$ on $\Chi$, Coxeter
 number.
\end{tabular}
\end{center}
For every dominant weight $\l \in \Chi_+$, let $L_\l$ (resp.~$\Delta_\l$, resp.~$T_\l$) denote the simple (resp. standard, resp.\ indecomposable tilting) module with highest weight~$\l$.

\subsection{Generations for simple characters} \label{sec:simple}

For f\/ixed $p$ and any highest weight $\l \in \Chi_+$ the f\/irst author has recently def\/ined characters \cite{LuGenerations}
\begin{gather*}
\chi_\l^0, \chi_\l^1, \chi_\l^2, \dots, \chi_\l^\infty \in (\ZM \Chi)^\Wf
\end{gather*}
with the following properties:
\begin{enumerate}\itemsep=0pt
\item[1)] $\chi_\l^0$ is the character of the simple highest weight module in characteristic $0$ (i.e., $\chi_\l^0$ is given by Weyl's character formula);
\item[2)] $\chi_\l^1$ is the character of the simple highest weight module for the quantum group at a $p^{\rm th}$-root of unity;
\item[3)] $\chi_\l^n$ is obtained from $\chi_\l^{n-1}$ by a formula involving Kazhdan--Lusztig polynomials;
\item[4)] if $\l = \l_0 + p\l_1 + \dots + p^n\l_n$ with $\l_i \in \Chi_{<p}$ and $\langle \alpha_0^\vee, \l_n + \rho \rangle \le p$ (a~generalised Jantzen condition) then $\chi_\l^n = \chi_\l^{n+1} = \dots = \chi_\l^{\infty}$;
\item[5)] if $p$ is large then $\chi_\l^\infty$ is the character of $L_\l$.
\end{enumerate}

\begin{Remark} \label{rem:generations} Recall that the Steinberg tensor product theorem has ``one step'' for the quantum group at a root of unity, and ``inf\/initely many steps'' for~$G$. The characters $\chi_\l^n$ can be thought of as the simple characters for an object with an ``$n$ step'' Steinberg tensor product theorem. It is not known whether such an object exists. For $\mathfrak{sl}_2$ such an object (for any $n$) has recently been proposed by Angiono~\cite{Angiono}. It seems likely that one can combine recent work of Elias \cite{EQS} with work of Riche and the second author~\cite{RW} to construct such an object in type $A$ for $n = 2$.
\end{Remark}

\subsection{Generations for tilting characters} As for simple characters, there should exist approximations to tilting characters. That is, for any $p$ and dominant weight $\l \in \Chi$ there should exist characters
\begin{gather*}
\theta_\l^0, \theta_\l^1, \theta_\l^2, \dots, \theta_\l^\infty \in (\ZM \Chi)^\Wf
\end{gather*}
with the following properties:
\begin{enumerate}\itemsep=0pt
\item[1)] $\theta_\l^0$ is the character of the simple highest weight module in characteristic $0$ (i.e., $\theta_\l^0$ is given by Weyl's character formula);
\item[2)] $\theta_\l^1$ is the character of the indecomposable tilting module for the quantum group at a~$p^{\rm th}$-root of unity;
\item[3)] $\theta_\l^n$ is a positive linear combination of the $\theta_\mu^{n-1}$'s for all $n \ge 1$;
\item[4)] if $\langle \a_0^\vee, \l + \rho \rangle \le p^{n+1}$ then $\theta_\l^n = \theta_\l^{n+1} = \dots = \theta_\l^{\infty}$;
\item[5)] if $p$ is large then $\theta_\l^{\infty}$ is the character of $T_\l$.
\end{enumerate}

\begin{Remark} Parts (4) and (5) for $n = 1$ imply Andersen's conjecture \cite{AFilt1}, which is still open (even for $p$ large).
\end{Remark}

\begin{Remark}The characters $\theta_\l^n$ are def\/ined for a subset of $\Chi_+$ (roughly, those for which one can apply the tilting tensor product
theorem) in~\cite{LusztigWilliamson}.
\end{Remark}

\subsection[Our conjecture at $v = 1$]{Our conjecture at $\boldsymbol{v = 1}$} \label{sec:v1}
Let us assume that $p \ge h$, and consider the $p$-dilated action of the af\/f\/ine Weyl group from above. For any choice of ``generation parameters'' $0 \le m \le n \le \infty$ we can write
\begin{gather} \label{eq:2}
 \theta^n_{x \pdot 0} = \sum_{y \in W} d_{y,x}^{m,n} \cdot \theta_{y \pdot 0}^m
\end{gather}
for some $d_{y,x}^{m,n} \in \ZM_{\ge 0}$. (The fact that we can form such expressions follows from the linkage principle and our assumptions above.) The question of determining characters of indecomposable tilting characters is equivalent to determining the coef\/f\/icients $d_{y,x}^{\infty, 0}$ for all $y, x \in \fW$.

\begin{Remark} A philosophy underlying the current paper is that it might be easier to calculate the $d_{y,x}^{n+1,n}$ for all $n$, rather than calculate the $d_{y,x}^{\infty, 0}$ directly. This is the case for simple characters, as explained in Section~\ref{sec:simple}.
\end{Remark}

The following is known about these coef\/f\/icients:
\begin{enumerate}\itemsep=0pt
\item[1)] when $G = \GL_n$, the $d_{y,x}^{\infty,0}$ are equal to decomposition numbers for symmetric groups, by work of Donkin \cite{DonkinTilting} and Erdmann~\cite{Erdmann};
\item[2)] the $d_{y,x}^{1,0}$ are given as the value at 1 of certain parabolic Kazhdan--Lusztig polynomials, by work of Soergel \cite{SoergelKippKM, SoeKL};
\item[3)] the $d_{y,x}^{\infty, 1}$ are equal to coef\/f\/icients of James' ``adjustment matrix''~\cite{James}.
\end{enumerate}

\begin{Remark} Aside from the above, very little is known. The papers \cite{JensenTilting, Parker} contain some interesting calculations for~$\SL_3$
\end{Remark}

In this paper we formulate a conjecture for ($v$-analogues of the) coef\/f\/icients $d_{y,x}^{2,1}$, when \smash{$G = \SL_3$}. (Note that in the case of $G = \SL_3$ the coef\/f\/icients $d_{y,x}^{1,0}$ given by Soergel's algorithm are easily calculated, and may be described by closed formulas.) By property (4) of the previous section, this provides a conjecture for many $d_{y,x}^{\infty,1}$, and hence for many coef\/f\/icients of the adjustment matrix (and hence decomposition numbers) for three row partitions.

\subsection[Generations for the $p$-canonical basis]{Generations for the $\boldsymbol{p}$-canonical basis} \label{sec:genpcan}
Let $\He$ denote the Iwahori--Hecke algebra of the af\/f\/ine Weyl group $\Wa$ over $\Zv$. Consider the anti-spherical module (see, e.g., \cite{RW,WTakagi}):
\begin{gather*}
\AntiS_v := \sgn_v \otimes_{\He_f} \He = \bigoplus_{x \in \f\Wa} \ZM\big[v^{\pm 1}\big] n_x.
\end{gather*}
The anti-spherical module has a canonical basis $\{ \un{n}_x \, | \, x \in \f\Wa \}$ and a $p$-canonical basis $\{ \p\un{n}_x \, | \, x \in \f\Wa \}$ \cite{JensenW,RW}.

In \cite{RW} (see also \cite[Section~2.8]{WTakagi}) it is conjectured that if we write
\begin{gather*}
\p\un{n}_x = \sum \p{m_{y,x}} n_y,
\end{gather*}
then the values at 1 of the coef\/f\/icients $\p{m_{y,x}}$ express the characters of tilting modules in terms of Weyl characters.\footnote{See \cite[Conjecture~1.7]{RW} for a precise formulation. Note that this conjecture is expected to hold without any restrictions on~$p$.} Let
us give a precise version of this conjecture for $p \ge h$: for all $x, y \in \f\Wa$ we have (in the notation of Section~\ref{sec:v1})
\begin{gather} \label{eq:tilting-mult}
d_{y,x}^{\infty,0} = \p{m_{y,x}}(1).
\end{gather}
(Recall that, by the translation principle and still under our assumption $p \ge h$, knowledge of the left hand side of~\eqref{eq:tilting-mult} for all $x, y \in \f\Wa$ implies knowledge of the characters of all tilting modules for~$G$.)

\begin{Remark} In \cite{RW} this conjecture is proved for $\GL_n$ and $p > h$. In \cite{ELo} this conjecture is proved for $\GL_n$ for general $p$. In \cite{AMRW1, AMRW2, ARFM} this conjecture is proved in all types for $p > h$.
\end{Remark}

The presence of $v$ is the shadow of a non-trivial grading on the category of tilting modules.\footnote{This grading is conjectural in general. In
\cite{AMRW1,AMRW2,ARFM,ELo, RW} its existence is established in many cases.} It is natural to expect that the above approximations to tilting characters can also be made to respect this grading. That is, we expect that for all $p > h$ and $x \in \f\Wa$, there exist elements
\begin{gather*}
\p\un{n}_x^1, \p\un{n}_x^2, \dots, \p\un{n}_x^{\infty} \in \AntiS_v
\end{gather*}
such that:
\begin{enumerate}\itemsep=0pt
\item[1)] $\p\un{n}_x^1 = \un{n}_x$;
\item[2)] $\p\un{n}_x^n$ is a $\ZM_{\ge 0}[v^{\pm 1}]$-linear combination of the $\p\un{n}_y^{n-1}$'s for all $n \ge 1$;
\item[3)] if $\langle \a_0^\vee, x \pdot 0 + \rho \rangle \le p^{n+1}$ then
$\p\un{n}_x^n = \p\un{n}_x^{n+1} = \dots = \p\un{n}_x^\infty$;
\item[4)] if $p$ is large then $\p\un{n}_x^\infty = \p\un{n}_x$.
\end{enumerate}

\section{The new algorithm}

In this section we attempt to give the reader some idea of how we perform the calculations which led to our conjecture. The second author hopes to give more details in~\cite{WSym}. As explained in the previous section, the main theorems of \cite{ELo, RW} (see also \cite{AMRW1, AMRW2}) imply that it is enough to calculate the $p$-canonical basis $\{ \p \un{n}_x \, | \, x \in \fW \}$ in the anti-spherical module $\AntiS_v$.

Let $\HC$ denote the diagrammatic Hecke category associated to the af\/f\/ine Cartan matrix of type $\widetilde{A}_n$. Recall that $\HC$ is built starting from a ``realisation''
\begin{gather*}
\big( \{ \alpha_s^\vee \}_{s \in S} \subset \hg, \{ \alpha_s \}_{s \in S} \subset \hg^*\big),
\end{gather*}
where $\hg$ is a free and f\/inite rank $\ZM$-module, $\hg^*$ is its dual, and the usual formulas def\/ine a~representation of $W$. It will be important below that our realisation is chosen so that the simple roots $\{ \alpha_s \}_{s \in S} \subset \hg^*$ are linearly independent.

The Hecke category is def\/ined over $\ZM$. After f\/ixing a prime $p$, extension of scalars yields the Hecke category $\HC_{\ZM_p}$ def\/ined over the $p$-adic integers. Let $\ASCat_{\ZM_p}$ denote the anti-spherical category over $\ZM_p$ considered in \cite{LiW, RW}. It is a right $\HC_{\ZM_p}$-module category and one has a canonical identif\/iciation of right $[\HC_{\ZM_p}]_\oplus = H$-modules:
\begin{gather*}
\AntiS_v \simto [\ASCat_{\ZM_p}]_{\oplus},
\end{gather*}
where $[-]_{\oplus}$ denotes the split Grothendieck group of an additive category (see \cite{LiW, RW} for more details). Under this isomorphism the classes of the indecomposable self-dual objects yields the $p$-canonical basis in $\AntiS_v$.

The anti-spherical category $\ASCat_{\ZM_p}$ is def\/ined by explicit generators and relations. Thus the question of determining the characters of its indecomposable objects (and hence the $p$-canonical basis) is a~concrete question of f\/inding idempotents in certain f\/inite rank $\ZM_p$-algebras. However these diagrammatic calculations are prohibitively dif\/f\/icult in all but the simplest cases.

Instead we exploit the philosophy of localisation. To $\ASCat_{\ZM_p}$ are associated progressively simpler categories:
\begin{gather*}
\ASCat_{\ZM_p} \rightsquigarrow \ASCat_{\QM_p} \rightsquigarrow Q \otimes_R \ASCat_{\QM_p},
\end{gather*}
where $\rightsquigarrow$ denotes some form of localisation. The f\/irst localisation $\ASCat_{\QM_p}$ is obtained from $\ASCat_{\ZM_p}$ by inverting $p$. The second is the main object of study of~\cite{LiW}: the polynomial ring $R = \QM_p[\widetilde{\alpha}]$ ($\widetilde{\alpha}$~denotes the af\/f\/ine simple root) acts on the left on all hom spaces in $\ASCat_{\QM_p}$, and after inverting~$\widetilde{\alpha}$ one obtains $Q \otimes_R
\ASCat_{\QM_p}$, where $Q$ denotes $\QM_p(\widetilde{\alpha})$. Somewhat surprisingly, this category is semi-simple~\cite{LiW}. (This is analogous to deformed category $\mathcal{O}$, which is semi-simple for generic parameters. A big dif\/ference in the current setting is that the deformation ring $R$ is always one dimensional.)

The algorithm we used to calculate the $p$-canonical basis now proceeds in two steps:
\begin{enumerate}\itemsep=0pt
\item Firstly, $\ASCat_{\QM_p}$ is described as a quiver with relations, and the action of the Hecke category~$\HC_{\ZM_p}$ on~$\ASCat_{\QM_p}$ is
described explicitly, in terms of the quiver. This is already a very non-trivial task, and is only feasible for $\SL_2$, $\SL_3$ and perhaps~$\SL_4$. It is possible in these cases thanks to the localisation $Q \otimes_R \ASCat_{\QM_p}$ (where any calculation can be reduced to a~calculation in matrices with entries in~$Q$), the fact that the Kazhdan--Lusztig conjectures hold in~$\AS_{\QM_p}$ (thus one has graded dimensions for hom spaces and knows how the generators of~$\HC_{\ZM_p}$ act on the Grothendieck group etc.), and the fact that the Kazhdan--Lusztig theo\-ry of these anti-spherical modules in low rank is not too complicated (Kazhdan--Lusztig polynomials can be written down explicitly).
\item Secondly, via the above localisations one can describe~$\ASCat_{\ZM_p}$ as a~$\ZM_p$-lattice inside~$\ASCat_{\QM_p}$. Moreover, this lattice is the smallest lattice which contains the generating object and is stable under the action of the Hecke category $\HC_{\ZM_p}$. One may describe this lattice explicitly and inductively via the $\HC_{\ZM_p}$-action, using the philosophy of the light leaf basis. Details of how this is done in this setting will be contained in \cite{WSym}.
\end{enumerate}

\begin{Remark} Recently, L.T.~Jensen has done calculations which omit the f\/irst localisation and calculate the $p$-canonical basis using only the localisation $Q \otimes_R \ASCat_{\QM_p}$. This is a much simpler approach, and appears (much to the second author's surprise!) not to be any slower than the approach described above. It may well be that Jensen's modif\/ication of the algorithm ends up being the more ef\/fective.
\end{Remark}

\section{Billiards}

For the rest of the paper we f\/ix $G = \SL_3$, $\Chi = \ZM\varpi_1 \oplus \ZM \varpi_2$, $\Sigma = \{ \alpha_1, \alpha_2 \}$ etc.

Fix $\ell \ge 1$. (For applications to representation theory $\ell$ will be prime, however for the moment~$\ell$ can be any positive integer. Soon we will assume $\ell \ge 3$.) Elements of the set
\begin{gather*}
 \MC := \ZM_{\ge 0} \times \big\{ v^k \, | \, k \in \ZM \big\}
\end{gather*}
will be called \emph{labels}. Labels will often be denoted by an integer followed by a bracketed power of $v$; e.g.,~$51(v^7)$.

 A \emph{labelled point} is an element of $\Chi \times \MC$. Labelled points will usually be denoted $(\mu,m)$ with $\mu \in \Chi$ and $m \in \MC$ or $(\mu,n,v^k)$ with $\mu \in \Chi$ and $n \in \ZM_{\ge 0}$.

Our goal in this section is to describe an algorithm which produces a~multiset (i.e., set with multiplicities) of labelled points via an inductive procedure. Throughout this section we work entirely with multisets. All operations (union, dif\/ference, \dots) are to be understood in the context
of multisets.

\subsection{Overview of the algorithm} 
We view the dominant weights $\Chi_+$ as the vertices of a directed graph $\Gamma$ with edges $\l \to \l + \g$ if $\l, \l + \g \in \Chi_+$ and $\g \in \{\varpi_1, \varpi_2 - \varpi_1, -\varpi_2 \}$:
\begin{gather*}
\Gamma = \begin{array}{c}
 \begin{tikzpicture}[yscale=-0.8, xscale=-0.8, point/.style={inner
 sep=0pt,circle,fill,minimum size=1mm}]
\begin{scope}[shift={(120:7)}]
\node at (3,0) [point] {};
\end{scope}
\begin{scope}[shift={(60:7)}]
\foreach \x in {0,30,60} { \node at (\x:-4.5) {$.$}; \node at
 (\x:-4.7) {$.$}; \node at (\x:-4.9) {$.$};}
\end{scope}
\foreach \y in {3,4,...,6}
{
\foreach \x in {3,...,\y}
{
\begin{scope}[shift={(120:\y)}]
\node (x\x\y) at (\x,0) [point] {};
\draw[->] (x\x\y) to ++(60:0.8);
\draw[->] ($(\x,0)+(120:1)$) to ($(\x,0)+(120:0.2)$);
\end{scope}
}
}
\begin{scope}[shift={(60:7)}]
\node[left] at (0,0) {$0$};
\node[below] at (-1,0) {$\varpi_1$};
\node[left] at (-120:1) {$\varpi_2$};
\end{scope}
\foreach \y in {4,5,6,7}
{
\foreach \x in {4,...,\y}
{
\begin{scope}[shift={(120:\y)}]
\node (x\x\y) at (\x,0) [point] {};
\draw[->] (\x,0) to (\x-0.8,0);
\end{scope}
}
}
 \end{tikzpicture}
\end{array}
\end{gather*}

Our algorithm consists of three steps. We begin with the labelled point $\big(0, 0, v^0\big)$ (our initial ``seed''). Each step in our algorithm enlarges our multiset in a new direction, starting with seeds generated in the previous step. The f\/irst step extends our set along a wall of the dominant chamber in the direction of $\varpi_1$ to produce a set~$X$. The second step extends our set along the walls of the $\ell$-alcoves to produce a multiset $Y$. Finally, the third step extends our multiset within the interior of each $\ell$-alcove to produce a multiset $Z$. In the third step, certain labelled points arising in the second step (seeds) gives rise to two spirals inside the interior of two adjacent alcoves which move like a
billiard. Finally we consider
\begin{gather*}
\widetilde{Z} := Z \setminus X,
\end{gather*}
which is the object of our conjecture.

\subsection{Step 1: the wall of the dominant chamber} Consider the full subgraph of $\Chi_+$ consisting of multiples of $\varpi_1$:
\begin{gather*}
\begin{tikzpicture}[scale=1,point/.style={inner sep=0pt,circle,fill,minimum size=1mm}]
\begin{scope}[color=black]
\foreach \x in {0,1, ..., 8} \node (x\x) at (\x,0) [point] {};
\foreach \x in {0,1, ..., 8} \draw[->] ($(x\x) + (0.2,0)$) to ($(x\x) + (0.8,0)$);
\end{scope}
\node[below] at (x0) {$0$};
\node[below] at (x1) {$\varpi_1$};
\end{tikzpicture}
\end{gather*}
Consider a labelled point $m = \big(\mu, m\big(v^k\big)\big)$ such that $\mu$ belongs to this full subgraph. A~\emph{small step} produces the labelled point $\big(\mu', (m+2)\big(v^k\big)\big)$ where $\mu \to \mu'$ is the unique edge with source~$\mu$. Consider the set~$X$ obtained by repeatedly taking small steps beginning with the labelled point $\big(0,0\big(v^0\big)\big)$. In other words, $X$ is the set
\begin{gather} \label{eq:X}
X = \big\{ \big( k \omega_1, 2k\big(v^0\big)\big) \, | \, k \in \ZM_{\ge 0} \big\}.
\end{gather}
We designate the labelled points of the form $(\ell k, 2k\ell(v^0))$ with $k > 0$ as \emph{seeds}.

\begin{Remark} Of course we could have def\/ined $X$ via \eqref{eq:X} directly. We prefer the inductive def\/inition, as it is closer in spirit to the more complicated def\/initions which will occur in the next two steps.
\end{Remark}

\subsection{Step 2: dynamics on the walls}\label{sec:wallbilliards}
From now on assume that $\ell \ge 3$. Consider the subgraph of $\Gamma$ with vertices $\l \in \Chi_+$ such that $\langle \l, \alpha^\vee \rangle \in \ell\ZM$ for some $\a \in\Phi_+$ and edges $(\l, \l')$ such that $\langle \l, \alpha^\vee \rangle = \langle \l', \alpha^\vee \rangle \in \ell \ZM$ for some $\a \in \Phi_+$. Let $\Gamma_\wall$ denote the graph obtained by removing the gray edges and vertices as in Fig.~\ref{fig:gammawall}.
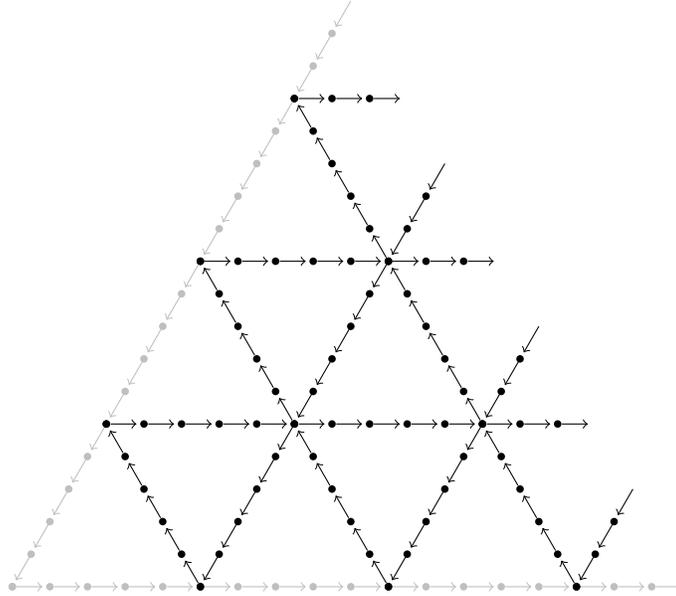
\begin{figure}\centering
\begin{tikzpicture}[scale=0.5,point/.style={inner sep=0pt,circle,fill,minimum size=1mm}]
\begin{scope}[color=gray!50]
\foreach \x in {0,1, ..., 17} \node (x\x) at (\x,0) [point] {};
\foreach \x in {0,1, ..., 17} \draw[->] (x\x) to ($(x\x) + (0.8,0)$);
\end{scope}
\begin{scope}[color=gray!50,shift={(0,0)},rotate=60]
\foreach \x in {0,1, ..., 17} \node (x\x) at (\x,0) [point] {};
\foreach \x in {0,1, ..., 17} \draw[->] ($(x\x) + (1,0)$) to ($(x\x) +
(0.2,0)$);
\end{scope}
\begin{scope}[shift={(60:5)}]
\foreach \x in {0,1, ..., 12} \node (x\x) at (\x,0) [point] {};
\foreach \x in {0,1, ..., 12} \draw[->] (x\x) to ($(x\x) + (0.8,0)$);
\end{scope}
\begin{scope}[shift={(60:10)}]
\foreach \x in {0,1, ..., 7} \node (x\x) at (\x,0) [point] {};
\foreach \x in {0,1, ..., 7} \draw[->] (x\x) to ($(x\x) + (0.8,0)$);
\end{scope}
\begin{scope}[shift={(60:15)}]
\foreach \x in {0,1, ..., 2} \node (x\x) at (\x,0) [point] {};
\foreach \x in {0,1, ..., 2} \draw[->] (x\x) to ($(x\x) + (0.8,0)$);
\end{scope}
\begin{scope}[shift={(5,0)},rotate=120]
\foreach \x in {0,1, ..., 5} \node (x\x) at (\x,0) [point] {};
\foreach \x in {0,1, ..., 4} \draw[->] (x\x) to ($(x\x) + (0.8,0)$);
\end{scope}
\begin{scope}[shift={(10,0)},rotate=120]
\foreach \x in {0,1, ..., 10} \node (x\x) at (\x,0) [point] {};
\foreach \x in {0,1, ..., 9} \draw[->] (x\x) to ($(x\x) + (0.8,0)$);
\end{scope}
\begin{scope}[shift={(15,0)},rotate=120]
\foreach \x in {0,1, ..., 15} \node (x\x) at (\x,0) [point] {};
\foreach \x in {0,1, ..., 14} \draw[->] (x\x) to ($(x\x) + (0.8,0)$);
\end{scope}
\begin{scope}[shift={(5,0)},rotate=60]
\foreach \x in {0,1, ..., 12} \node (x\x) at (\x,0) [point] {};
\foreach \x in {0,1, ..., 12} \draw[->] ($(x\x) + (1,0)$) to ($(x\x) +
(0.2,0)$);
\end{scope}
\begin{scope}[shift={(10,0)},rotate=60]
\foreach \x in {0,1, ..., 7} \node (x\x) at (\x,0) [point] {};
\foreach \x in {0,1, ..., 7} \draw[->] ($(x\x) + (1,0)$) to ($(x\x) +
(0.2,0)$);
\end{scope}
\begin{scope}[shift={(15,0)},rotate=60]
\foreach \x in {0,1, ..., 2} \node (x\x) at (\x,0) [point] {};
\foreach \x in {0,1, ..., 2} \draw[->] ($(x\x) + (1,0)$) to ($(x\x) +
(0.2,0)$);
\end{scope}
 \end{tikzpicture}
\caption{The directed graph $\Gamma_\wall$.} \label{fig:gammawall}
\end{figure}
Points in $\Chi_+$ or $\Gamma_\wall$ such that $\langle \lambda, \alpha^\vee \rangle \in \ell \ZM$ for all $\alpha \in \Phi_+$ are called \emph{corner points}. A~point $\mu$ in $\Chi_+$ of $\Gamma_\wall$ is an \emph{almost corner} if there exists a corner point~$c$ and an arrow $c \to \mu$ in~$\Chi_+$.

Fix a labelled point $\big(\mu,n\big(v^k\big)\big)$ with $\mu \in \Gamma_\wall$. We assume that~$\mu$ is such that there is a~unique edge with source~$\mu$. From $\big(\mu,n\big(v^k\big)\big)$ one may obtain new labelled points as follows:
\begin{enumerate}\itemsep=0pt
\item A \emph{rest} produces the labelled point $\big(\mu,(n+3)\big(v^{k+1}\big)\big)$.
\item A \emph{small step} produces (as above) the labelled point $\big(\mu', (n+2)\big(v^k\big)\big)$, where $\mu \to \mu'$ is the unique edge in $\Gamma_\wall$ with source $\mu$.
\item A \emph{giant leap} produces either one or two new labelled points, and is only possible if $\mu$ is not a corner or almost corner point. Def\/ine $d$ to be the direction of the unique arrow with source~$\mu$. First we proceed $j < \ell - 1$ steps in the graph~$\Gamma_\wall$ in direction $d$ until we reach a corner point. We then proceed for another~$\ell-1-j$ steps in all directions from the corner point which do not agree with the direction $d$. (There are either one or two such directions.) A giant leap consists of the resulting points, which are labelled by $(n+2\ell + 1)\big(v^{k+1}\big)$ (see Fig.~\ref{fig:giantleap}).
\end{enumerate}

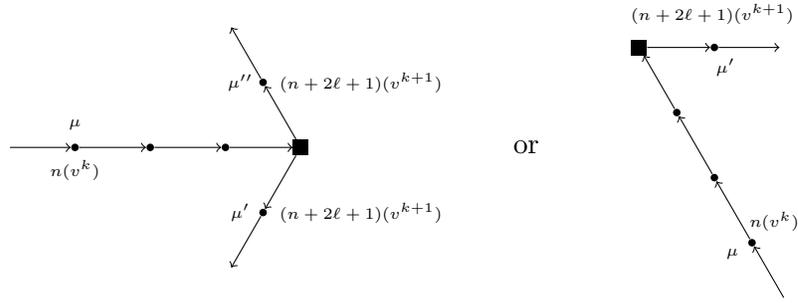
\begin{figure}\centering
 \begin{tikzpicture}[scale=1,point/.style={inner
 sep=0pt,circle,fill,minimum size=1mm},
corner/.style={inner sep=0pt,fill,minimum size=2mm}]
 \node (x1) at (1,0) {};
 \node (x2) at (2,0) [point] {};
 \node (x3) at (3,0) [point] {};
 \node (x4) at (4,0) [point] {};
 \node (x5) at (5,0) [corner] {};
\begin{scope}[shift={(5,0)},rotate=120]
 \node (x6) at (1,0) [point] {};
 \node (x7) at (2,0) {};
\end{scope}
\begin{scope}[shift={(5,0)},rotate=-120]
 \node (x6b) at (1,0) [point] {};
 \node (x7b) at (2,0) {};
\end{scope}
\draw[->] (x1) to (x2); \draw[->] (x2) to
(x3); \draw[->] (x3) to (x4); \draw[->] (x4) to (x5);
\draw[->] (x5) to (x6); \draw[->] (x6) to (x7);
\draw[->] (x5) to (x6b); \draw[->] (x6b) to (x7b);
\node at ($(x2) + (0,0.3)$) {\tiny $\mu$}; \node at ($(x2) - (0,0.3)$)
{\tiny $n(v^k)$};
\node at ($(x6) - (0.3,0)$) {\tiny $\mu''$}; \node at ($(x6) + (1.3,0)$)
{\tiny $(n+2\ell + 1)(v^{k+1})$};
\node at ($(x6b) - (0.3,0)$) {\tiny $\mu'$}; \node at ($(x6b) + (1.3,0)$)
{\tiny $(n+2\ell + 1)(v^{k+1})$};
\node at (8,0) {or};
\begin{scope}[shift={(12,-3)},rotate=120]
 \node (x1) at (1,0) {};
 \node (x2) at (2,0) [point] {};
 \node (x3) at (3,0) [point] {};
 \node (x4) at (4,0) [point] {};
 \node (x5) at (5,0) [corner] {};
\begin{scope}[shift={(5,0)},rotate=-120]
 \node (x6b) at (1,0) [point] {};
 \node (x7b) at (2,0) {};
\end{scope}
\draw[->] (x1) to (x2); \draw[->] (x2) to
(x3); \draw[->] (x3) to (x4); \draw[->] (x4) to (x5);
\draw[->] (x5) to (x6b); \draw[->] (x6b) to (x7b);
\node at ($(x2) + (0,0.3)$) {\tiny $\mu$}; \node at ($(x2) - (-0.1,0.4)$)
{\tiny $n(v^k)$};
\node at ($(x6b) - (0.3,0)$) {\tiny $\mu'$};
\node at ($(x6b) + (0.4,-0.2)$) {\tiny $(n+2\ell + 1)(v^{k+1})$};
\end{scope}
 \end{tikzpicture}
\caption{Performing a giant leap with $\ell = 5$. (Squares denote corner points.)} \label{fig:giantleap}
\end{figure}

Now suppose that we are given a labelled point $q = (\mu, n, v^k)$ as above (i.e., such that there is a unique edge in $\Gamma_\wall$ with source~$\mu$). We now describe a way of producing a multiset of labelled points (all with multiplicity~1) beginning with~$q$, some of which are designated as \emph{seeds}:
\begin{enumerate}\itemsep=0pt
\item \emph{If $\mu$ is a corner point.} We produce $\ell$ new points as follows: we take $\ell-1$ small steps to produce labelled points $m = m_1, \dots, m_{\ell - 1}$ and then rest to produce a new labelled point~$m_\ell$. The f\/inal labelled point $m_\ell$ is designated as a seed. (See Fig.~\ref{fig:baby}.)
\item \emph{If $\mu$ is an almost corner point.} We produce $\ell$ new points as follows: let $m = m_1$ and rest once to produce a new labelled point $m_2$, now take $\ell -2 $ small steps starting with~$m_2$ to produce $\ell - 2$ labelled points $m_3, \dots, m_{\ell -1}$ and f\/inally rest once more to produce a f\/inal labelled point $m_\ell$. The f\/inal labelled point $m_\ell$ is designated as a seed. (See Fig.~\ref{fig:baby}.)
\item \emph{If $\mu$ is neither a corner nor an almost corner point.} We take a giant leap to produce one or two new labelled points, each of which are designated seeds.
\end{enumerate}
We now iterate this process as follows. Starting with $q$ we produce a sequence of multisets
$Q_1, Q_2, Q_3, \dots$,
where $Q_1$ (resp.~$Q_i$ for $i > 1$) is obtained by applying the above procedure to~$q$ (resp.\ to each seed in $Q_{i-1}$). We say that the union (as multisets)
\begin{gather*}
Q = \bigcup_{i \ge 0} Q_i
\end{gather*}
is the result of \emph{applying dynamics on the walls to the seed $q$}.

\begin{figure}\centering
\begin{tikzpicture}[scale=1.2,point/.style={inner sep=0pt,circle,fill,minimum size=1mm}]
\begin{scope}[shift={(5,0)},rotate=120]
\foreach \x in {0,1, ..., 5} \node (x\x) at (\x,0) [point] {};
\foreach \x in {0,1, ..., 4} \draw[color=gray!50,->] (x\x) to ($(x\x) + (0.8,0)$);
\node[inner sep=0pt,left] (a1) at (0,0.15) {\tiny $\un{10(v^0)}$};
\node[left] (a2) at (x1) {\tiny $12(v^0)$};
\node[left] (a3) at (x2) {\tiny $14(v^0)$};
\node[left] (a4) at (x3) {\tiny $16(v^0)$};
\node[left] (a5) at (x4) {\tiny $18(v^0)$};
\node[inner sep=0pt,right] (a6) at (4,-0.15) {\tiny
 $\un{21(v^1)}$};
\draw[->] (a1) to (a2); \draw[->] (a2) to (a3); \draw[->] (a3) to
(a4); \draw[->] (a4) to (a5);
\end{scope}
\draw[->] (a5) to[out=70,in=110] (a6);
 \end{tikzpicture}
\qquad
\begin{tikzpicture}[scale=1.2,point/.style={inner sep=0pt,circle,fill,minimum size=1mm}]
\begin{scope}[shift={(5,0)},rotate=120]
\foreach \x in {0,1, ..., 5} \node (x\x) at (\x,0) [point] {};
\foreach \x in {0,1, ..., 4} \draw[color=gray!50,->] (x\x) to ($(x\x) + (0.8,0)$);
\node[inner sep=0pt,left] (a1) at (1,-.75) {\tiny $\un{54(v^4)}$};
\node[left] (a2) at (x1) {\tiny $57(v^5)$};
\node[left] (a3) at (x2) {\tiny $59(v^5)$};
\node[left] (a4) at (x3) {\tiny $61(v^5)$};
\node[left] (a5) at (x4) {\tiny $63(v^5)$};
\node[inner sep=0pt,right] (a6) at (4,-0.15) {\tiny
 $\un{66(v^6)}$};
\draw[->] (a2) to (a3); \draw[->] (a3) to
(a4); \draw[->] (a4) to (a5);
\end{scope}
\draw[->] (a1) to[out=-90,in=-60] (a2);
\draw[->] (a5) to[out=70,in=110] (a6);
 \end{tikzpicture}
\caption{An illustration of cases (1) and (2) when $\ell = 5$. Seeds are underlined.}\label{fig:baby}
\end{figure}
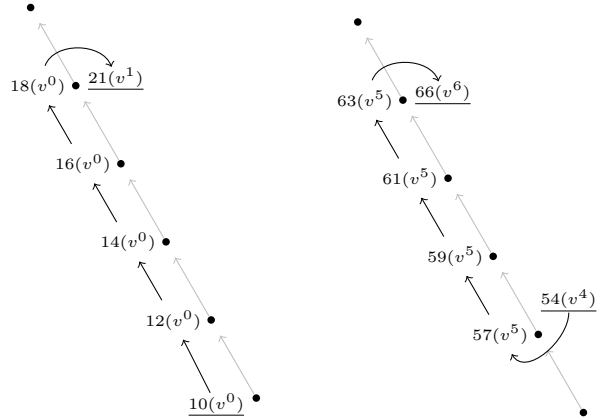

\begin{Example} Fig.~\ref{fig:l5ex} gives an example with $\ell = 5$. Our initial seed is the unique point with label $10\big(v^0\big)$. We illustrate a few
iterations of the above algorithm. Seeds are underlined. Note that one more iteration of the algorithm will produce the labelled point $\big( 3\varpi_1 + 7\varpi_2, 88\big(v^8\big)\big)$ with multiplicity~2.
\begin{figure}[t]\centering
\begin{tikzpicture}[scale=1.35,point/.style={inner sep=0pt,circle,fill,minimum size=1mm}]
\clip (2,-2) rectangle (11.5,12);
\begin{scope}[shift={(60:5)}]
\foreach \x in {0,1, ..., 12} \node (x\x) at (\x,0) [point] {};
\foreach \x in {0,1, ..., 12} \draw[color=gray!70, ->] (x\x) to ($(x\x) + (0.8,0)$);
\end{scope}
\node[above] at (x3) {\tiny $\un{32(v^2)}$};
\node[below] at (x3) {\tiny $\un{77(v^7)}$};
\begin{scope}[shift={(60:10)}]
\foreach \x in {0,1, ..., 7} \node (x\x) at (\x,0) [point] {};
\foreach \x in {0,1, ..., 7} \draw[color=gray!70,->] (x\x) to ($(x\x) + (0.8,0)$);
\end{scope}
\node[above] at (x1) {\tiny $\un{54(v^4)}$};
\node[below] at (x1) {\tiny $57(v^5)$};
\node[below] at (x2) {\tiny $59(v^5)$};
\node[below] at (x3) {\tiny $61(v^5)$};
\node[below] at (x4) {\tiny $63(v^5)$};
\node[above] at (x4) {\tiny $\un{66(v^6)}$};
\begin{scope}[shift={(60:15)}]
\foreach \x in {0,1, ..., 2} \node (x\x) at (\x,0) [point] {};
\foreach \x in {0,1, ..., 2} \draw[color=gray!70,->] (x\x) to ($(x\x) + (0.8,0)$);
\end{scope}
\begin{scope}[shift={(5,0)},rotate=120]
\foreach \x in {0,1, ..., 5} \node (x\x) at (\x,0) [point] {};
\foreach \x in {0,1, ..., 4} \draw[color=gray!70,->] (x\x) to ($(x\x) + (0.8,0)$);
\end{scope}
\node[inner sep=0pt,left] at (x0) {\tiny $\un{10(v^0)}$};
\node[below left] at (x1) {\tiny $12(v^0)$};
\node[below left] at (x2) {\tiny $14(v^0)$};
\node[below left] at (x3) {\tiny $16(v^0)$};
\node[below left] at (x4) {\tiny $18(v^0)$};
\node[left] at (x4) {\tiny $\un{21(v^1)}$};
\node[right] at (x1) {\tiny $\un{54(v^4)}$};
\node[above right] at (x1) {\tiny $57(v^5)$};
\node[above right] at (x2) {\tiny $59(v^5)$};
\node[above right] at (x3) {\tiny $61(v^5)$};
\node[right] at (x4) {\tiny $63(v^5)$};
\node[above right] at (x4) {\tiny $\un{66(v^6)}$};
\begin{scope}[shift={(10,0)},rotate=120]
\foreach \x in {0,1, ..., 10} \node (x\x) at (\x,0) [point] {};
\foreach \x in {0,1, ..., 9} \draw[color=gray!70,->] (x\x) to ($(x\x) + (0.8,0)$);
\end{scope}
\node[left] at (x7) {\tiny $\un{43(v^3)}$};
\begin{scope}[shift={(15,0)},rotate=120]
\foreach \x in {0,1, ..., 15} \node (x\x) at (\x,0) [point] {};
\foreach \x in {0,1, ..., 14} \draw[color=gray!70,->] (x\x) to ($(x\x) + (0.8,0)$);
\end{scope}
\node[right] at (x13) {\tiny $\un{77(v^7)}$};
\begin{scope}[shift={(5,0)},rotate=60]
\foreach \x in {0,1, ..., 12} \node (x\x) at (\x,0) [point] {};
\foreach \x in {0,1, ..., 12} \draw[color=gray!70,->] ($(x\x) + (1,0)$) to ($(x\x) +
(0.2,0)$);
\end{scope}
\node[right] at (x3) {\tiny $\un{43(v^3)}$};
\node[right] at (x7) {\tiny $\un{77(v^7)}$};
\begin{scope}[shift={(10,0)},rotate=60]
\foreach \x in {0,1, ..., 7} \node (x\x) at (\x,0) [point] {};
\foreach \x in {0,1, ..., 7} \draw[color=gray!70,->] ($(x\x) + (1,0)$) to ($(x\x) +
(0.2,0)$);
\end{scope}
\begin{scope}[shift={(15,0)},rotate=60]
\foreach \x in {0,1, ..., 2} \node (x\x) at (\x,0) [point] {};
\foreach \x in {0,1, ..., 2} \draw[color=gray!70,->] ($(x\x) + (1,0)$) to ($(x\x) +
(0.2,0)$);
\end{scope}
 \end{tikzpicture}\vspace{-23mm}

\caption{Dynamics on the walls with $\ell = 5$.}\label{fig:l5ex}
\end{figure}
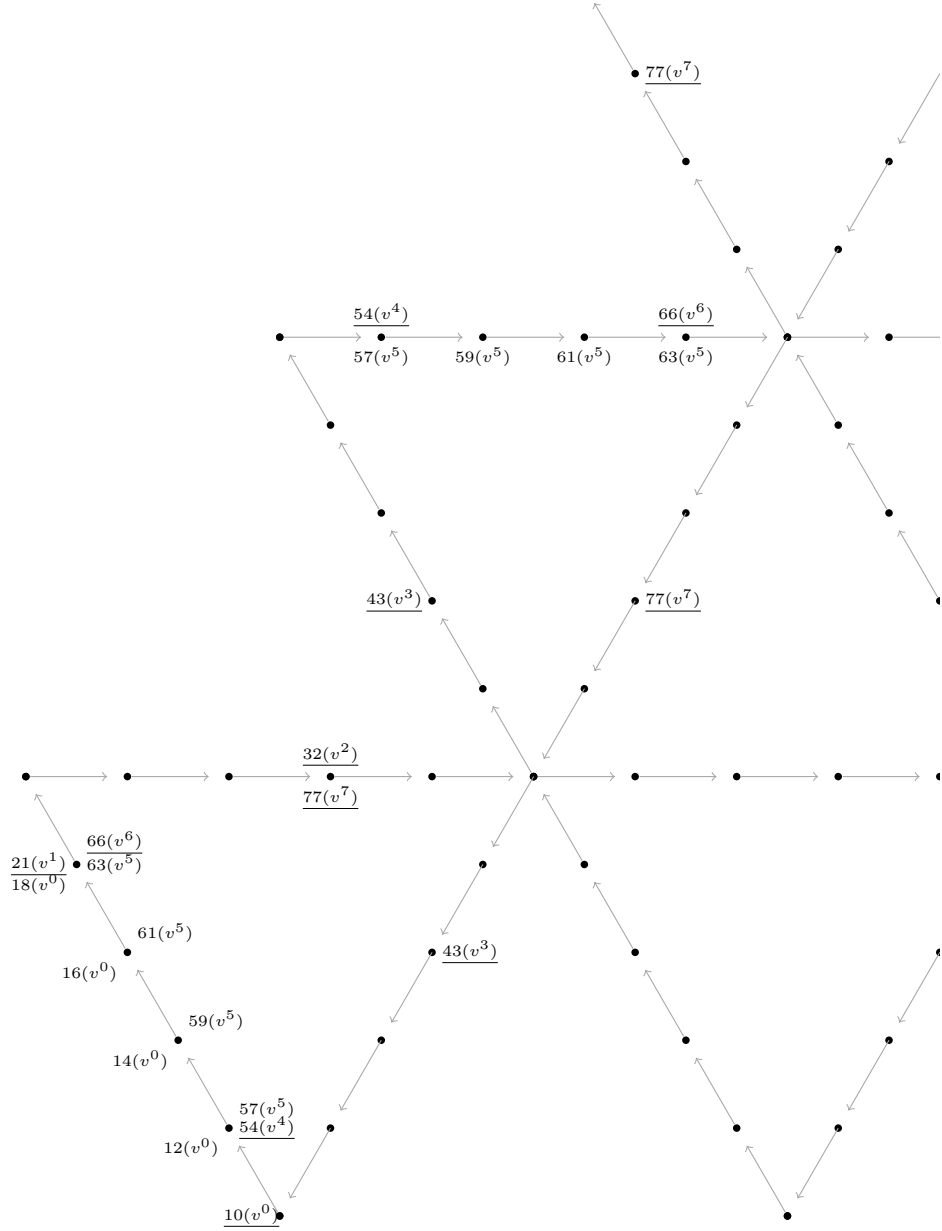
\end{Example}

Now consider our set $X$ from the previous step. We apply dynamicson walls to each seed in~$X$ (i.e., each labelled point of the form $\big(k\ell\varpi_1, 2k\ell\big(v^0\big)\big)$ with $k > 0$) to generate a multiset~$Y_k$. We now def\/ine a~new multiset{\samepage
\begin{gather*}
Y := X \cup \bigcup_{k > 0} Y_k.
\end{gather*}
We also remember (for the purposes of the next step) which elements of $Y$ were designated seeds.}

\begin{Remark} \label{rem:growth} Consider dynamics on the wall restricted to the full subgraph displayed in Fig.~\ref{fig:l5growth} with seed $q = \big(\mu, n\big(v^k\big)\big)$. Then $4$ iterations of the above algorithm yields the following points and labels (see Fig.~\ref{fig:l5growth}):
 \begin{gather*}
 u, u' \quad \text{with label} \quad (n+11)\big(v^{k+1}\big), \\
 u_i, u_i' \quad \text{with label} \quad (n+14 + 2(i-1))
 \big(v^{k+2}\big) \quad \text{for }{i = 1,2,3,4},\\
 v, v' \quad \text{with label} \quad (n+23)\big(v^{k+3}\big), \\
 w, w' \quad \text{with label} \quad (n+34)\big(v^{k+4}\big),
 \end{gather*}
and \looseness=-1 then $\mu$ with label $(n+45)\big(v^{k+5}\big)$ and multiplicity~2. (The point is that there are two directed paths leading from $\mu$ back to itself, which causes the multiplicity to double.) Repeating this algorithm $4i$ times leads to $q_i := \big(\mu, n+45i\big(v^{k+5i}\big)\big)$ with multiplicity~$2^i$. From this observation one deduces easily that for $\ell = 5$ the set $Y$ contains labelled points whose multiplicity grows exponentially in~$n$. Similar considerations show that the same statement about~$Y$ is true for any~$\ell$.
\end{Remark}

\begin{figure}[t]\centering
\begin{tikzpicture}[scale=.7,point/.style={inner
 sep=0pt,circle,fill,minimum size=1mm}]
\foreach \x in {0,1, ..., 5} \node (x\x) at (\x,0) [point] {};
\foreach \x in {0,1, ..., 4} \draw[color=gray!70, ->] ($(x\x) +
(0.2,0)$) to ($(x\x) + (0.8,0)$);
\node[below] at (x2) {$q$};
\begin{scope}[rotate=60]
\foreach \x in {0,1, ..., 5} \node (x\x) at (\x,0) [point] {};
\foreach \x in {0,1, ..., 4} \draw[color=gray!70, <-] ($(x\x) +
(0.2,0)$) to ($(x\x) + (0.8,0)$);
\node [above left] at (x2) {$w$};
\end{scope}
\begin{scope}[shift={(0:5)},rotate=120]
\foreach \x in {0,1, ..., 5} \node (x\x) at (\x,0) [point] {};
\foreach \x in {0,1, ..., 4} \draw[color=gray!70,->] ($(x\x) +
(0.2,0)$) to ($(x\x) + (0.8,0)$);
\node [above right] at (x1) {$u$};
\node [left] at (x1) {$u_1$}; \node [left] at (x2) {$u_2$};
\node [left] at (x3) {$u_3$}; \node [left] at (x4) {$u_4$};
\node [above right] at (x4) {$v$};
\end{scope}
\begin{scope}[rotate=300]
\foreach \x in {0,1, ..., 5} \node (x\x) at (\x,0) [point] {};
\foreach \x in {0,1, ..., 4} \draw[color=gray!70, <-] ($(x\x) +
(0.2,0)$) to ($(x\x) + (0.8,0)$);
\node [below left] at (x2) {$w'$};
\end{scope}
\begin{scope}[shift={(0:5)},rotate=240]
\foreach \x in {0,1, ..., 5} \node (x\x) at (\x,0) [point] {};
\foreach \x in {0,1, ..., 4} \draw[color=gray!70,->] ($(x\x) +
(0.2,0)$) to ($(x\x) + (0.8,0)$);
\node [below right] at (x1) {$u'$};
\node [left] at (x1) {$u_1'$}; \node [left] at (x2) {$u_2'$};
\node [left] at (x3) {$u_3'$}; \node [left] at (x4) {$u_4'$};
\node [below right] at (x4) {$v'$};
\end{scope}
 \end{tikzpicture}
\caption{Subgraph demonstrating exponential growth of multiplicities for $\ell = 5$.}\label{fig:l5growth}
\end{figure}
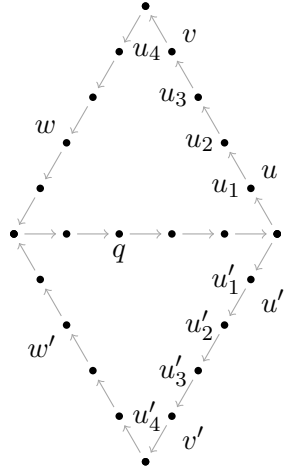

\subsection{Step 3: billiards in an alcove} \label{sec:alcovebilliards}

We now describe an algorithm which produces for each seed in $Y$ a~multiset of labelled points. We need a little more notation. We consider the dominant weights $\Chi_+$ as embedded in the vector space
\begin{gather*}
\Chi_\RM := \Chi \otimes_{\ZM} \RM,\end{gather*}
which we regard as a Euclidean space for some $\Wf$-invariant bilinear form. An \emph{$\ell$-alcove} is a~connected component of the complement
\begin{gather*}\Chi_{\RM} \setminus \bigcup_{\a \in \Phi_+^\vee} \big\{ \l \, | \, \langle \alpha^\vee, \l \rangle \in \ell \ZM \big\}.\end{gather*}
An \emph{$\ell$-regular point} is a point which belongs to some $\ell$-alcove. An $\ell$-alcove is \emph{dominant} if it is contained in the real cone generated by $\Chi_+$.

Now consider a seed $q = (\mu, m) \in Y$. We want to associate a multiset of labelled points $\Lambda_q$ to $q$. If $\mu$ is a corner or almost corner, we set $\Lambda_q := \varnothing$.

From now on we assume that $q$ is neither a corner or almost corner. In this case $\mu$ belongs to the closure of two dominant $\ell$-alcoves, $A'$ and $A''$. We perform an identical procedure for both alcoves, so f\/ix one such alcove and call it $A$. Let $\Delta$ denote the full subgraph of $\Gamma$ consisting of points belonging to $\overline{A}$. Then $\Delta$ has one of the following forms:
\begin{gather*}
 \begin{tikzpicture}[scale=0.7, point/.style={inner
 sep=0pt,circle,fill,minimum size=1mm}]
\draw [decorate,decoration={brace,amplitude=10pt},xshift=-10pt,yshift=-5pt]
(0,0) to (120:7);
\node at (-3,2.4) {\tiny $\ell + 1$};
\node at (-3,2) {\tiny points};
\begin{scope}[xscale=-1]
\node at (120:7) [point] {};
\foreach \y in {0,1,...,6}
{
\foreach \x in {0,...,\y}
{
\begin{scope}[shift={(120:\y)}]
\node (x\x\y) at (\x,0) [point] {};
\draw[->] (x\x\y) to ++(60:0.8);
\draw[->] ($(\x,0)+(120:1)$) to ($(\x,0)+(120:0.2)$);
\end{scope}
}
}
\foreach \y in {1,2,...,7}
{
\foreach \x in {1,...,\y}
{
\begin{scope}[shift={(120:\y)}]
\node (x\x\y) at (\x,0) [point] {};
\draw[->] (\x,0) to (\x-0.8,0);
\end{scope}
}
}
\end{scope}
\begin{scope}[scale=-1,shift={(7,-6)}]
\node at (120:7) [point] {};
\foreach \y in {0,1,...,6}
{
\foreach \x in {0,...,\y}
{
\begin{scope}[shift={(120:\y)}]
\node (x\x\y) at (\x,0) [point] {};
\draw[->] (x\x\y) to ++(60:0.8);
\draw[->] ($(\x,0)+(120:1)$) to ($(\x,0)+(120:0.2)$);
\end{scope}
}
}
\foreach \y in {1,2,...,7}
{
\foreach \x in {1,...,\y}
{
\begin{scope}[shift={(120:\y)}]
\node (x\x\y) at (\x,0) [point] {};
\draw[->] (\x,0) to (\x-0.8,0);
\end{scope}
}
}
\end{scope}
 \end{tikzpicture}
\end{gather*}
Points of $\Delta$ belong to $\overline{A}$ but not to $A$ are called \emph{wall points}. Note that our starting point $\mu$ is a~wall point.

\emph{Variables and their initialisations:} Let $(\mu_\start, m_\start) := (\mu,m)$ and let $d_\start$ denote the unique edge of~$\Delta$ with source $\mu$ and target an interior point of $\Delta$. To begin with, set $\Lambda := \{ (\mu_\start, m_\start) \}$. In the algorithm we need the following variables, which are initialised as follows:
\begin{alignat*}{3}
& \curpoint := \mu_{\textrm{start}}, \qquad && \curlabel := m_{\textrm{start}},& \\
& \curdir := d_{\textrm{start}},\qquad && \cursign := 1. &
\end{alignat*}

\emph{The loop:} The multiset of labelled points is obtained by repeating the following ad inf\/initum. Let $\mu_\new$ denote the point obtained by moving from $\curpoint$ one step in $\curdir$. Write $\curlabel$ as $n\big(v^k\big)$. Two possibilities may occur:
\begin{enumerate}\itemsep=0pt
\item $\mu_{\new}$ is $\ell$-regular:
\begin{gather*}
\curlabel := (n+2)\big(v^{k}\big),\qquad \curpoint := \mu_\new,\\
\text{add $(\curpoint, \curlabel)$ to $\Lambda$}.
\end{gather*}
(The variables $\curdir$ and $\cursign$ remain unchanged.)
\item $\curpoint$ is $\ell$-regular and $\mu_{\new}$ is a
 wall point:
\begin{alignat*}{3}
& \curlabel := (n+3)\big(v^{k + \cursign}\big),\qquad\!\! && \curdir := d_\new,& \\
& \cursign := -\cursign, \qquad\!\! && \text{add $(\curpoint, \curlabel)$ to $\Lambda$}.&
\end{alignat*}
Here $d_\new$ denotes the vector obtained by ref\/lecting $\curdir$ in the wall on which~$\mu_\new$ lies. (The variable $\curpoint$ remains unchanged.)
\end{enumerate}

\begin{Example} We illustrate the f\/irst few steps of the algorithm in some examples. In each case the starting point $\mu_\start$ and its label are underlined. The successive values of $\curlabel$ and $\curpoint$ are obtained by following the arrows.

\begin{enumerate}\itemsep=0pt
\item A reasonably generic example with $\ell = 11$:
\begin{gather*}
\resizebox{12cm}{!}{
\begin{tikzpicture}
[scale=1.3, point/.style={inner sep=0pt,circle,fill,minimum size=1mm,color=gray}]
%
\node (x00) at (0,0) [point] {}; \node[fill] (x10) at (1,0) [point] {}; \node (x20) at (2,0) [point] {}; \node[fill] (x30) at (3,0) [point] {};
\node (x40) at (4,0) [point] {}; \node[fill] (x50) at (5,0) [point] {}; \node (x60) at (6,0) [point] {}; \node[fill] (x70) at (7,0) [point] {};
\node (x80) at (8,0) [point] {}; \node[fill] (x90) at (9,0) [point] {}; \node (x100) at (10,0) [point] {}; \node[fill] (x110) at (11,0) [point] {};
\begin{scope}[shift={(-60:1)}]
\node (x01) at (0,0) [point] {}; \node[fill] (x11) at (1,0) [point] {}; \node (x21) at (2,0) [point] {}; \node[fill] (x31) at (3,0) [point] {};
\node (x41) at (4,0) [point] {}; \node[fill] (x51) at (5,0) [point] {}; \node (x61) at (6,0) [point] {}; \node[fill] (x71) at (7,0) [point] {};
\node (x81) at (8,0) [point] {}; \node[fill] (x91) at (9,0) [point] {}; \node (x101) at (10,0) [point] {};
 \end{scope}
\begin{scope}[shift={(-60:2)}]
\node (x02) at (0,0) [point] {}; \node[fill] (x12) at (1,0) [point] {}; \node (x22) at (2,0) [point] {}; \node[fill] (x32) at (3,0) [point] {};
\node (x42) at (4,0) [point] {}; \node[fill] (x52) at (5,0) [point] {}; \node (x62) at (6,0) [point] {}; \node[fill] (x72) at (7,0) [point] {};
\node (x82) at (8,0) [point] {}; \node[fill] (x92) at (9,0) [point] {};
 \end{scope}
\begin{scope}[shift={(-60:3)}]
\node[fill] (x13) at (1,0) [point] {}; \node (x23) at (2,0) [point] {}; \node[fill] (x33) at (3,0) [point] {};
\node (x43) at (4,0) [point] {}; \node[fill] (x53) at (5,0) [point] {}; \node (x63) at (6,0) [point] {}; \node[fill] (x73) at (7,0) [point] {};
\node (x83) at (8,0) [point] {};
 \end{scope}
\begin{scope}[shift={(-60:4)}]
\node (x04) at (0,0) [point] {}; \node[fill] (x14) at (1,0) [point] {}; \node (x24) at (2,0) [point] {}; \node[fill] (x34) at (3,0) [point] {};
\node (x44) at (4,0) [point] {}; \node[fill] (x54) at (5,0) [point] {}; \node (x64) at (6,0) [point] {}; \node[fill] (x74) at (7,0) [point] {};
 \end{scope}
\begin{scope}[shift={(-60:5)}]
\node (x05) at (0,0) [point] {}; \node[fill] (x15) at (1,0) [point] {}; \node (x25) at (2,0) [point] {}; \node[fill] (x35) at (3,0) [point] {};
\node (x45) at (4,0) [point] {}; \node[fill] (x55) at (5,0) [point] {}; \node (x65) at (6,0) [point] {};
 \end{scope}
\begin{scope}[shift={(-60:6)}]
\node (x06) at (0,0) [point] {}; \node[fill] (x16) at (1,0) [point] {}; \node (x26) at (2,0) [point] {}; \node[fill] (x36) at (3,0) [point] {};
\node (x46) at (4,0) [point] {}; \node[fill] (x56) at (5,0) [point] {};
 \end{scope}
\begin{scope}[shift={(-60:7)}]
\node (x07) at (0,0) [point] {}; \node[fill] (x17) at (1,0) [point] {}; \node (x27) at (2,0) [point] {}; \node[fill] (x37) at (3,0) [point] {};
\node (x47) at (4,0) [point] {};
 \end{scope}
\begin{scope}[shift={(-60:8)}]
\node (x08) at (0,0) [point] {}; \node[fill] (x18) at (1,0) [point] {}; \node (x28) at (2,0) [point] {}; \node[fill] (x38) at (3,0) [point] {};
 \end{scope}
\begin{scope}[shift={(-60:9)}]
\node (x09) at (0,0) [point] {}; \node[fill] (x19) at (1,0) [point] {}; \node (x29) at (2,0) [point] {};
 \end{scope}
\begin{scope}[shift={(-60:10)}]
\node (x010) at (0,0) [point] {}; \node[fill] (x110) at (1,0) [point] {};
 \end{scope}
\begin{scope}[shift={(-60:11)}]
\node (x011) at (0,0) [point] {};
 \end{scope}
\node (n0) at ($(x13)+(0.2,0.1)$) {\small $(*)$};
\node (n0) at ($(x13)+(0.2,0.2)$) {};
\node (n1) at ($(x13)+(0,-0.2)$) {\tiny $16(v)$};
\node (n2) at ($(x23)+(0,-0.2)$) {\tiny $18(v)$};
\node (n3) at ($(x33)+(0,-0.2)$) {\tiny $20(v)$};
\node (n4) at ($(x43)+(0,-0.2)$) {\tiny $22(v)$};
\node (n5) at ($(x53)+(0,-0.2)$) {\tiny $24(v)$};
\node (n6) at ($(x63)+(0,-0.2)$) {\tiny $26(v)$};
\node (n7) at ($(x73)+(0,-0.2)$) {\tiny $28(v)$};
\node (n8) at ($(x73)+(0.3,0.2)$) {\tiny $31(v^2)$};
\node (n9) at ($(x72)+(0.3,0.2)$) {\tiny $33(v^2)$};
\node (n10) at ($(x71)+(0.3,0.2)$) {\tiny $35(v^2)$};
\node (n11) at ($(x71)+(-0.3,0.2)$) {\tiny $38(v)$};
\node (n12) at ($(x62)+(-0.2,0.2)$) {\tiny $40(v)$};
\node (n13) at ($(x53)+(-0.2,0.2)$) {\tiny $42(v)$};
\node (n14) at ($(x44)+(-0.2,0.2)$) {\tiny $44(v)$};
\node (n15) at ($(x35)+(-0.2,0.2)$) {\tiny $46(v)$};
\node (n16) at ($(x26)+(-0.2,0.2)$) {\tiny $48(v)$};
\node (n17) at ($(x17)+(-0.3,0.2)$) {\tiny $50(v)$};
\node (n18) at ($(x17)-(0,0.2)$) {\tiny $53(v^2)$};
\node (n19) at ($(x27)-(0,0.2)$) {\tiny $55(v^2)$};
\node (n20) at ($(x37)-(0,0.2)$) {\tiny $57(v^2)$};
\node (n21) at ($(x37)+(0.3,0.2)$) {\tiny $60(v)$};
\node (n22) at ($(x36)+(0.3,0.2)$) {\tiny $62(v)$};
\node (n23) at ($(x35)+(0.3,0.2)$) {\tiny $64(v)$};
\node (n24) at ($(x34)+(0.3,0.2)$) {\tiny $66(v)$};
\node (n25) at ($(x33)+(0.3,0.2)$) {\tiny $68(v)$};
\node (n26) at ($(x32)+(0.3,0.2)$) {\tiny $70(v)$};
\node (n27) at ($(x31)+(0.3,0.2)$) {\tiny $72(v)$};
\node (n28) at ($(x31)+(-0.3,0.2)$) {\tiny $75(v^2)$};
\node (n29) at ($(x22) +(-0.3,0.2)$) {\tiny $77(v^2)$};
\node (n30) at ($(x13) +(-0.3,0.2)$) {\tiny $\dots$};
\node[inner sep=1pt] (seed) at (-60:3.2) {\tiny $\un{14(v)}$};
\draw[->] (seed) to (n1);
\draw[->] (n1) -- (n2);
\draw[->] (n2) -- (n3);
\draw[->] (n3) -- (n4);
\draw[->] (n4) -- (n5);
\draw[->] (n5) -- (n6);
\draw[->] (n6) -- (n7);
\draw[->] (n7) to [out=0,in=-60] (n8);
\draw[->] (n8) -- (n9);
\draw[->] (n9) -- (n10);
\draw[->] (n10) to [out=120,in=60] (n11);
\draw[->] (n11) -- (n12);
\draw[->] (n12) -- (n13);
\draw[->] (n13) -- (n14);
\draw[->] (n14) -- (n15);
\draw[->] (n15) -- (n16);
\draw[->] (n16) -- (n17);
\draw[->] (n17) to [out=-120,in=180] (n18);
\draw[->] (n18) -- (n19);
\draw[->] (n19) -- (n20);
\draw[->] (n20) to [out=0,in=-60] (n21);
\draw[->] (n21) -- (n22);
\draw[->] (n22) -- (n23);
\draw[->] (n23) -- (n24);
\draw[->] (n24) -- (n25);
\draw[->] (n25) -- (n26);
\draw[->] (n26) -- (n27);
\draw[->] (n27) to [out=120,in=60] (n28);
\draw[->] (n28) -- (n29);
\draw[->] (n29) -- (n30);
 \end{tikzpicture}
}
\end{gather*}
If we focus on the point marked with an asterix (*) we obtain the labels:
\begin{gather*}
\big\{ 16(v), 79\big(v^2\big), 82(v), 145\big(v^2\big), 148(v), 211\big(v^2\big), 214(v), 277\big(v^2\big), 280(v), \dots \big\}.
\end{gather*}
\item An interesting example with $\ell = 5$ (we display both alcoves):
\begin{gather*}
\resizebox{14cm}{!}{
\begin{tikzpicture}
[xscale=3.5,yscale=3.5, point/.style={inner sep=0pt,circle,fill,minimum size=1mm,color=gray}]
\node (x00) at (0,0) [point] {}; \node[fill] (x10) at (1,0) [point] {}; \node (x20) at (2,0) [point] {}; \node[fill] (x30) at (3,0) [point] {};
\node (x40) at (4,0) [point] {}; \node[fill] (x50) at (5,0) [point] {};
\begin{scope}[shift={(60:1)}]
\node (x01) at (0,0) [point] {}; \node[fill] (x11) at (1,0) [point] {}; \node (x21) at (2,0) [point] {}; \node[fill] (x31) at (3,0) [point] {};
\node (x41) at (4,0) [point] {};
\begin{scope}[shift={(1,0)}]
\node (a3) at (150:0.2) {$*$};
\node (a4) at (-150:0.2) {$*$};
\node (a5) at (-90:0.2) {$*$};
\end{scope}
\begin{scope}[shift={(2,0)}]
\node (a6) at (-90:0.2) {$*$};
\end{scope}
\begin{scope}[shift={(3,0)}]
\node (a7) at (-90:0.2) {$*$};
\node (a8) at (-30:0.2) {$*$};
\node (a9) at (30:0.2) {$*$};
\end{scope}
 \end{scope}
\begin{scope}[shift={(60:2)}]
\node (x02) at (0,0) [point] {}; \node[fill] (x12) at (1,0) [point]
{}; \node (x22) at (2,0) [point] {}; \node[fill] (x32) at (3,0)
[point] {};
\begin{scope}[shift={(1,0)}]
\node (a2) at (150:0.2) {$*$};
\node (a2b) at (150:0.1) {\rotatebox{60}{$\dots$}};
\end{scope}
\begin{scope}[shift={(2,0)}]
\node (a10) at (30:0.2) {$*$};
\end{scope}
 \end{scope}
\begin{scope}[shift={(60:3)}]
\node (x03) at (0,0) [point] {}; \node[fill] (x13) at (1,0) [point]
{}; \node (x23) at (2,0) [point] {};
\begin{scope}[shift={(1,0)}]
\node (a1) at (150:0.2) {$*$};
\node (a1b) at (150:0.1) {$*$};
\node (a12) at (75:0.15) {$*$};
\node (a11) at (20:0.2) {$*$};
\end{scope}
 \end{scope}
\begin{scope}[shift={(60:4)}]
\node (x04) at (0,0) [point] {}; \node (x14) at (1,0) {};
 \end{scope}
\begin{scope}[shift={(60:5)}]
\node (x05) at (0,0) [point] {};
 \end{scope}
\draw[->] (x14) to[out=-150,in=60] (a1);
\draw[->] (a1) to (a2); \draw[->] (a2) to (a3);
\draw[->] (a3) to (a4); \draw[->] (a4) to (a5);
\draw[->] (a5) to (a6); \draw[->] (a6) to (a7);
\draw[->] (a7) to (a8); \draw[->] (a8) to (a9);
\draw[->] (a9) to (a10); \draw[->] (a10) to (a11);
\draw[->] (a11) to (a12); \draw[->] (a12) to (a1b);
\draw[->] (a1b) to (a2b);
\draw[->,color=gray!60] (x50) -- (x41);
\draw[->,color=gray!60] (x41) -- (x32);
\draw[->,color=gray!60] (x32) -- (x23);
\draw[->,color=gray!60] (x23) -- (x14);
\draw[->,color=gray!60] (x14) -- (x05);
\draw[->,color=gray!60] (x50) -- (x40);
\draw[->,color=gray!60] (x40) -- (x30);
\draw[->,color=gray!60] (x30) -- (x20);
\draw[->,color=gray!60] (x20) -- (x10);
\draw[->,color=gray!60] (x10) -- (x00);
\draw[->,color=gray!60] (x00) -- (x05);
%
\node[fill=white] (n0) at (x14) { $\un{21(v)}$};
\begin{scope}[shift={(60:5)},xscale=1,yscale=-1]
%
\node (x00) at (0,0) [point] {}; \node[fill] (x10) at (1,0) [point] {}; \node (x20) at (2,0) [point] {}; \node[fill] (x30) at (3,0) [point] {};
\node (x40) at (4,0) [point] {}; \node[fill] (x50) at (5,0) [point] {};
\begin{scope}[shift={(60:1)}]
\node (x01) at (0,0) {}; \node[fill] (x11) at (1,0) [point] {}; \node (x21) at (2,0) [point] {}; \node[fill] (x31) at (3,0) [point] {};
\node (x41) at (4,0) [point] {};
\begin{scope}[shift={(1,0)}]
\node (a3) at (150:0.2) {$*$};
\node (a4) at (-160:0.15) {$*$};
\node (a5) at (-90:0.2) {$*$};
\node (a5b) at (-90:0.1) {$*$};
\end{scope}
\begin{scope}[shift={(2,0)}]
\node (a6) at (-90:0.2) {$*$};
\node (a6b) at (-90:0.1) {$\dots$};
\end{scope}
\begin{scope}[shift={(3,0)}]
\node (a7) at (-90:0.2) {$*$};
\node (a8) at (-30:0.2) {$*$};
\node (a9) at (30:0.2) {$*$};
\end{scope}
 \end{scope}
\begin{scope}[shift={(60:2)}]
\node (x02) at (0,0) [point] {}; \node[fill] (x12) at (1,0) [point]
{}; \node (x22) at (2,0) [point] {}; \node[fill] (x32) at (3,0)
[point] {};
\begin{scope}[shift={(1,0)}]
\node (a2) at (150:0.2) {$*$};
\end{scope}
\begin{scope}[shift={(2,0)}]
\node (a10) at (30:0.2) {$*$};
\end{scope}
 \end{scope}
\begin{scope}[shift={(60:3)}]
\node (x03) at (0,0) [point] {}; \node[fill] (x13) at (1,0) [point]
{}; \node (x23) at (2,0) [point] {};
\begin{scope}[shift={(1,0)}]
\node (a1) at (150:0.2) {$*$};
\node (a12) at (90:0.2) {$*$};
\node (a11) at (30:0.2) {$*$};
\end{scope}
 \end{scope}
\begin{scope}[shift={(60:4)}]
\node (x04) at (0,0) [point] {}; \node (x14) at (1,0) [point] {};
 \end{scope}
\begin{scope}[shift={(60:5)}]
\node (x05) at (0,0) [point] {};
 \end{scope}
\draw[->] (n0) to[out=-30,in=-180] (25:1.58);
\draw[->] (a1) to (a2); \draw[->] (a2) to (a3);
\draw[->] (a3) to (a4); \draw[->] (a4) to (a5b);
\draw[->] (a5) to (a6); \draw[->] (a6) to (a7);
\draw[->] (a7) to (a8); \draw[->] (a8) to (a9);
\draw[->] (a9) to (a10); \draw[->] (a10) to (a11);
\draw[->] (a11) to (a12); \draw[->] (a12) to (a1);
\draw[->] (a5b) to (a6b);
\draw[->,color=gray!60] (x50) -- (x40);
\draw[->,color=gray!60] (x40) -- (x30);
\draw[->,color=gray!60] (x30) -- (x20);
\draw[->,color=gray!60] (x20) -- (x10);
\draw[->,color=gray!60] (x10) -- (x00);
\draw[->,color=gray!60] (x50) -- (x05);
\end{scope}
 \end{tikzpicture}
}
\end{gather*}
In either alcove the sequence of labels obtained by following the arrows beginning at $21(v)$ is as follows:
\begin{gather*}
23(v), 25(v), 27(v), 30\ig(v^2\big), 33(v), 35(v), 37(v), 40\big(v^2\big),\\
\qquad 43(v), 45(v), 47(v), 50\big(v^2\big), 53(v), \dots\,.
\end{gather*}
\item
Another example with $\ell = 5$ (again we display both alcoves):
\begin{gather*}
\resizebox{14cm}{!}{
\begin{tikzpicture}
[xscale=3.5,yscale=3.5, point/.style={inner sep=0pt,circle,fill,minimum size=1mm,color=gray}]
\node (x00) at (0,0) [point] {}; \node[fill] (x10) at (1,0) [point] {}; \node (x20) at (2,0) [point] {}; \node[fill] (x30) at (3,0) [point] {};
\node (x40) at (4,0) [point] {}; \node[fill] (x50) at (5,0) [point] {};
\begin{scope}[shift={(60:1)}]
\node (x01) at (0,0) [point] {}; \node[fill] (x11) at (1,0) [point] {}; \node (x21) at (2,0) [point] {}; \node[fill] (x31) at (3,0) [point] {};
\node (x41) at (4,0) [point] {};
\begin{scope}[shift={(2,0)}]
\node (a2) at (-60:0.2) {$*$};
\node (a2b) at (-90:0.075) {$\rotatebox{0}{\dots}$};
\node (a3) at (-120:0.2) {$*$};
\end{scope}
 \end{scope}
\begin{scope}[shift={(60:2)}]
\node (x02) at (0,0) [point] {}; \node[fill] (x12) at (1,0) [point]
{}; \node (x22) at (2,0) [point] {}; \node[fill] (x32) at (3,0)
[point] {};
\begin{scope}[shift={(1,0)}]
\node (a4) at (180:0.2) {$*$};
\node (a5) at (120:0.2) {$*$};
\end{scope}
\begin{scope}[shift={(2,0)}]
\node (a1b) at (0:0.2) {$*$};
\node (a1a) at (0:0.3) {$*$};
\node (a6) at (60:0.2) {$*$};
\end{scope}
 \end{scope}
\begin{scope}[shift={(60:3)}]
\node (x03) at (0,0) [point] {}; \node[fill] (x13) at (1,0) [point]
{}; \node (x23) at (2,0) {};
 \end{scope}
\begin{scope}[shift={(60:4)}]
\node (x04) at (0,0) [point] {}; \node (x14) at (1,0) [point] {};
 \end{scope}
\begin{scope}[shift={(60:5)}]
\node (x05) at (0,0) [point] {};
 \end{scope}
\draw[->] (a1a) to (a2); \draw[->] (a2) to (a3);
\draw[->] (a3) to (a4); \draw[->] (a4) to (a5);
\draw[->] (a5) to (a6); \draw[->] (a6) to (a1b);
\draw[->] (a1b) to ($(a2b)+(0.09,0.05)$);
\draw[->,color=gray!60] (x50) -- (x41);
\draw[->,color=gray!60] (x41) -- (x32);
\draw[->,color=gray!60] (x32) -- (x23);
\draw[->,color=gray!60] (x23) -- (x14);
\draw[->,color=gray!60] (x14) -- (x05);
\draw[->,color=gray!60] (x50) -- (x40);
\draw[->,color=gray!60] (x40) -- (x30);
\draw[->,color=gray!60] (x30) -- (x20);
\draw[->,color=gray!60] (x20) -- (x10);
\draw[->,color=gray!60] (x10) -- (x00);
\draw[->,color=gray!60] (x00) -- (x05);
%
\begin{scope}[shift={(60:5)},xscale=1,yscale=-1]
\node (x00) at (0,0) [point] {}; \node[fill] (x10) at (1,0) [point] {}; \node (x20) at (2,0) [point] {}; \node[fill] (x30) at (3,0) [point] {};
\node (x40) at (4,0) [point] {}; \node[fill] (x50) at (5,0) [point] {};
\begin{scope}[shift={(60:1)}]
\node (x01) at (0,0) [point] {}; \node[fill] (x11) at (1,0) [point] {}; \node (x21) at (2,0) [point] {}; \node[fill] (x31) at (3,0) [point] {};
\node (x41) at (4,0) [point] {};
\begin{scope}[shift={(2,0)}]
\node (a2) at (-60:0.2) {$*$};
\node (a3) at (-120:0.2) {$*$};
\end{scope}
 \end{scope}
\begin{scope}[shift={(60:2)}]
\node (x02) at (0,0) [point] {}; \node[fill] (x12) at (1,0) [point]
{}; \node (x22) at (2,0) [point] {}; \node[fill] (x32) at (3,0)
[point] {};
\begin{scope}[shift={(1,0)}]
\node (a4) at (180:0.2) {$*$};
\node (a5) at (120:0.2) {$*$};
\node (a5b) at (120:0.1) {$*$};
\end{scope}
\begin{scope}[shift={(2,0)}]
\node (a1c) at (0:0.3) {$*$};
\node (a6) at (60:0.2) {$*$};
\end{scope}
 \end{scope}
\begin{scope}[shift={(60:3)}]
\node (x03) at (0,0) [point] {}; \node[fill] (x13) at (1,0) [point]
{}; \node (x23b) at (2,0) {};
 \end{scope}
\begin{scope}[shift={(60:4)}]
\node (x04) at (0,0) [point] {}; \node (x14) at (1,0) [point] {};
 \end{scope}
\begin{scope}[shift={(60:5)}]
\node (x05) at (0,0) [point] {};
 \end{scope}
\draw[->] (a1c) to (a2); \draw[->] (a2) to (a3);
\draw[->] (a3) to (a4); \draw[->] (a4) to (a5b);
\draw[->] (a5) to (a6); \draw[->] (a6) to (a1c);
\node at ($(a6)+(0.03,-0.14)$) {\rotatebox{40}{\dots}};
\draw[->] (a5b) to ($(a6)+(-0.1,-0.08)$);
\draw[->,color=gray!60] (x50) -- (x41);
\draw[->,color=gray!60] (x41) -- (x32);
\draw[->,color=gray!60] (x32) -- (x23b);
\draw[->,color=gray!60] (x23b) -- (x14);
\draw[->,color=gray!60] (x14) -- (x05);
\draw[->,color=gray!60] (x50) -- (x40);
\draw[->,color=gray!60] (x40) -- (x30);
\draw[->,color=gray!60] (x30) -- (x20);
\draw[->,color=gray!60] (x20) -- (x10);
\draw[->,color=gray!60] (x10) -- (x00);
\draw[->,color=gray!60] (x00) -- (x05);
\end{scope}
\node[fill=white] (n0) at (x23) { $\un{77(v^7)}$};
\draw[->] (n0) to[out=0,in=180] (a5);
\draw[->] (n0) to[out=-120,in=60] (a1a);
 \end{tikzpicture}
}
\end{gather*}
In either alcove the sequence beginning at $77\big(v^7\big)$ is
\begin{gather*}
79\big(v^7\big), 81\big(v^7\big), 84\big(v^8\big), 86\big(v^8\big), 89\big(v^7\big), 91\big(v^7\big), \dots\,.
\end{gather*}
\item Finally, an example with $\ell = 3$ (again we display both alcoves):
\begin{gather*}
\resizebox{10cm}{!}{
\begin{tikzpicture}
[xscale=3,yscale=3, point/.style={inner sep=0pt,circle,fill,minimum size=1mm,color=gray}]
\node (x00) at (0,0) [point] {}; \node[fill] (x10) at (1,0) [point] {}; \node (x20) at (2,0) [point] {}; \node[fill] (x30) at (3,0) [point] {};
\begin{scope}[shift={(60:1)}]
\node (x01) at (0,0) [point] {}; \node[fill] (x11) at (1,0) [point] {}; \node (x21) at (2,0) [point] {}; \node[fill] (x31) at (3,0) [point] {};
 \end{scope}
\begin{scope}[shift={(60:2)}]
\node (x02) at (0,0) [point] {}; \node (x12) at (1,0) {}; \node (x22) at (2,0) [point] {}; \node[fill] (x32) at (3,0) [point] {};
 \end{scope}
\begin{scope}[shift={(60:3)}]
\node (x03) at (0,0) [point] {}; \node (x13) at (1,0) [point] {}; \node (x23) at (2,0) [point] {}; \node[fill] (x33) at (3,0) [point] {};
 \end{scope}
\draw[color=gray!70] (x00) -- (x01) -- (x02) -- (x03) -- (x13) --
(x23) -- (x33) -- (x32) -- (x31) -- (x30) -- (x20) -- (x10) -- (x00);
\draw[color=gray!70] (x03) -- (x12) -- (x21) -- (x30);
 \node[fill=white] (n0) at ($(x12)+(0,0)$) {\tiny
 $\un{13(v)}$};
\begin{scope}[shift={(60:1)}] \begin{scope}[shift={(1,0)}]
\node (n1) at (150:0.3) {$*$};
\node (n2) at (-90:0.25) {$*$};
\node (n3) at (30:0.2) {$*$};
\node (n4) at (150:0.15) {$*$};
\node (n5) at (-90:0.1) {$\dots$};
 \end{scope} \end{scope}
%
\draw[->] (n0) to (n1);
\draw[->] (n1) to (n2);
\draw[->] (n2) to (n3);
\draw[->] (n3) to (n4);
\draw[->] (n4) to (n5);
\begin{scope}[shift={(60:2)}] \begin{scope}[shift={(2,0)}]
\node (p1) at (90:0.3) {$*$};
\node (p2) at (-30:0.25) {$*$};
\node (p3) at (-150:0.2) {$*$};
\node (p4) at (90:0.15) {$*$};
\node (p5) at (-30:0.1) {\rotatebox{30}{$\dots$}};
 \end{scope} \end{scope}
\draw[->] (n0) to (p1);
\draw[->] (p1) to (p2);
\draw[->] (p2) to (p3);
\draw[->] (p3) to (p4);
\draw[->] (p4) to ($(p5)+(0,0.05)$);
 \end{tikzpicture}
}
\end{gather*}
The sequence at either middle vertex is: $15(v)$, $18(v^2)$, $21(v)$, $24(v^2)$, $27(v)$, $\dots $.
\end{enumerate}
\end{Example}

Now, with $q = (\mu, m)$ as above (so that $\mu$ is neither a corner or almost corner) we apply the above algorithm to both alcoves which contain $\mu$ in their closure to produce multisets $\Lambda'$ and $\Lambda''$. We then remove $q$ from both, and def\/ine $\Lambda_q$ to be the union of the resulting multisets. (Thus~$\Lambda_q$ contains inf\/initely many points from both alcoves, but does not contain $q$ itself.)

The above algorithm produces, for each each seed $q \in Y$, a multiset $\Lambda_q$. We def\/ine $Z$ to be the union
\begin{gather*}
Z := Y \cup \bigcup_{\text{seeds } q \in Y} \Lambda_q.
\end{gather*}
Finally, we def\/ine $\widetilde{Z} := Z \setminus X$.

\begin{Example} \label{ex:Z} In Fig.~1 we display $\widetilde{Z}$ for $\ell = 5$ and all labelled points $\big(\mu, n\big(v^k\big)\big)$ with $n \le 82$. In the notation of Section~\ref{sec:conj}, the labelled points corresponding to $\mu \in \Chi_+$ are displayed in the smaller of the two alcoves contained in~$B_\mu$. The reader is referred to~\cite{WData} for further examples.
\end{Example}

\begin{Remark} For all $\big(\mu,n\big(v^k\big)\big) \in \widetilde{Z}$ it is easy to see that $\mu \in \Chi_{++} := \ZM_{>0} \varpi_1 \oplus \ZM_{>0} \varpi_2$.
\end{Remark}

\section{An altenative construction}

In this section we outline an alternative construction of the multiset $\widetilde{Z}$, which is more natural in some respects.

In the previous section we regarded the dominant weights $\Chi_+$ as the vertices of a~direc\-ted graph. An important dif\/ference in the current
construction is that now we consider \emph{all} weights~$\Chi$. That is, we consider the elements of~$\Chi$ as the vertices of a directed graph with edges $\l \to \l + \g$ if $\l, \l + \g \in \Chi_+$ and $\g \in \{ \varpi_1, \varpi_2 - \varpi_1, -\varpi_2 \}$. We write $\mu\to \mu'$ to indicate that there is a directed edge from~$\mu$ to~$\mu'$.

We use similar terminology to earlier: the notions of a \emph{wall point}, \emph{corner point}, \emph{almost corner} and \emph{$\ell$-regular point} extend in an obvious way to $\Chi$. Note that almost corners are necessarily wall points.

\subsection{Trees}
For us a \emph{rooted tree} is what many call an ``arborescence'': a directed graph with a distinguished vertex (the \emph{source}) such that there is a unique directed path from the source to any other vertex.\footnote{This is equivalent in an obvious way to the usual notion of
rooted tree.}

Given two rooted trees $A$ and $B$, denote by $A* B$ the rooted tree obtained by adjoining one copy of $B$ to each sink in $A$ at the source of $B$. That is, if $t_1, \dots, t_k$ denote the sinks of $A$ and we denote by $b_1, \dots, b_k$ the sources in $B \sqcup B \sqcup \dots \sqcup B$ ($k$~factors), then
\begin{gather*}
A * B := ( A \sqcup B \sqcup B \sqcup \cdots \sqcup B) / ( t_i \sim b_i).
\end{gather*}
 This operation is associative. The image of $A$ (resp. a copy of $B$) under this quotient map will be called a \emph{component} of type~$A$ (resp.~$B$).

Recall that $\ell \ge 3$ is f\/ixed. Consider the two rooted trees:
\begin{gather*}
\begin{tikzpicture}[point/.style={inner sep=0pt,circle,fill,minimum size=1mm,color=black}]
\node at (-1, 1.7) {$I := $};
\node[point] (a0) at (0,1.7) {};
\node[point] (a1) at (1, 1.7) {};
\node[point] (a2) at (2, 1.7) {};
\node (a3) at (3, 1.7) {\dots};
\node[point] (a4) at (4, 1.7) {};
\node[point] (a5) at (5, 1.7) {};
\draw[->] (a0) -- (a1); \draw[->] (a1)-- (a2);
\draw[->] (a2) -- (a3); \draw[->] (a3) -- (a4); \draw[->] (a4) --
(a5);
\node[point] (a0) at (0,0) {};
\node[point] (a1) at (1,0.4) {};
\node[point] (a1p) at (1,-0.4) {};
\node[point] (a2) at (2, 0.4) {};
\node[point] (a2p) at (2, -0.4) {};
\node (a3) at (3,0.4) {\dots};
\node (a3p) at (3,-0.4) {\dots};
\node[point] (a4) at (4, 0.4) {};
\node[point] (a4p) at (4, -0.4) {};
\node[point] (a5) at (5, 0.4) {};
\node[point] (a5p) at (5, -0.4) {};
\node[point] (a6) at (6, 0.4) {};
\node[point] (a6p) at (6, -0.4) {};
\node (a7) at (7, 0.4) {\dots};
\node (a7p) at (7, -0.4) {\dots};
\draw[->] (a0) -- (a1); \draw[->] (a1)-- (a2); \draw[->] (a2) -- (a3);
\draw[->] (a3) -- (a4); \draw[->] (a4) --(a5); \draw[->] (a5) --(a6);
\draw[->] (a6) --(a7);
\draw[->] (a0) -- (a1p); \draw[->] (a1p)-- (a2p); \draw[->] (a2p) -- (a3p);
\draw[->] (a3p) -- (a4p); \draw[->] (a4p) --(a5p); \draw[->] (a5p) --(a6p); \draw[->] (a6p) --(a7p);
\node at (-1,0) {$J :=$};
\node[point] (a5u) at (5,1) {};
\node[point] (a5up) at (5,-1) {};
\draw[->] (a4) -- (a5u); \draw[->] (a4p) -- (a5up);
\node at (0,-1.5) {{\tiny 0}};
\node at (1,-1.5) {{\tiny 1}};
\node at (2,-1.5) {{\tiny 2}};
\node at (4,-1.5) {{\tiny $\ell-1$}};
\node at (5,-1.5) {{\tiny $\ell$}};
\node at (6,-1.5) {{\tiny $\ell+1$}};
\end{tikzpicture}
\end{gather*}
The integers indicate the distance from the unique
source in each graph.

\begin{Example} \label{eg:IJJ}
We illustrate the operation $*$ with our graphs $I$ and $J$. For $\ell = 3$, the graph $I * J * J$ looks as follows:
\begin{gather*}
 \begin{tikzpicture}[scale=0.7, point/.style={inner
 sep=0pt,circle,fill,minimum size=1mm},
wallpoint/.style={inner sep=0pt,circle,draw,minimum size=2mm}]
\begin{scope}[shift={(7,4)}]
\foreach \x in {0,1,...,5} \node[point] (x4\x) at (\x,0) {};
\foreach \x in {0,1,...,4} \draw[->] (x4\x) to +(0.9,0);
\node at (6,0) {$\dots$}; \draw[->] (x45) to +(0.6,0);
\node[point] (x4out) at (2,1) {};
\draw[->] (x41) to[out=90,in=180] (x4out);
\end{scope}
\begin{scope}[shift={(7,3)}]
\foreach \x in {0,1,...,5} \node[point] (x3\x) at (\x,0) {};
\foreach \x in {0,1,...,4} \draw[->] (x3\x) to +(0.9,0);
\node at (6,0) {$\dots$}; \draw[->] (x35) to +(0.6,0);
\node[point] (x3out) at (2,-1) {};
\draw[->] (x31) to[out=-90,in=180] (x3out);
\end{scope}
\begin{scope}[shift={(7,-2)}]
\foreach \x in {0,1,...,5} \node[point] (xm3\x) at (\x,0) {};
\foreach \x in {0,1,...,4} \draw[->] (xm3\x) to +(0.9,0);
\node at (6,0) {$\dots$}; \draw[->] (xm35) to +(0.6,0);
\node[point] (x3mout) at (2,1) {};
\draw[->] (xm31) to[out=90,in=180] (x3mout);
\end{scope}
\begin{scope}[shift={(7,-3)}]
\foreach \x in {0,1,...,5} \node[point] (xm4\x) at (\x,0) {};
\foreach \x in {0,1,...,4} \draw[->] (xm4\x) to +(0.9,0);
\node at (6,0) {$\dots$}; \draw[->] (xm45) to +(0.6,0);
\node[point] (x4mout) at (2,-1) {};
\draw[->] (xm41) to[out=-90,in=180] (x4mout);
\end{scope}
\node[point] (su) at (6,3.5) {};
\node[point] (sd) at (6,-2.5) {};
\draw[->] (su) -- (x40); \draw[->] (su) -- (x30);
\draw[->] (sd) -- (xm40); \draw[->] (sd) -- (xm30);
\begin{scope}[shift={(4,1)}]
\foreach \x in {0,1,...,8} \node[point] (x1\x) at (\x,0) {};
\foreach \x in {0,1,...,7} \draw[->] (x1\x) to +(0.9,0);
\node at (9,0) {$\dots$}; \draw[->] (x18) to +(0.6,0);
\draw[->] (x11) to[out=90,in=180] (su);
\end{scope}
\begin{scope}[shift={(4,0)}]
\foreach \x in {0,1,...,8} \node[point] (xm1\x) at (\x,0) {};
\foreach \x in {0,1,...,7} \draw[->] (xm1\x) to +(0.9,0);
\node at (9,0) {$\dots$}; \draw[->] (xm18) to +(0.6,0);
\draw[->] (xm11) to[out=-90,in=180] (sd);
\end{scope}
\node[point] (x00) at (0,0.5) {};
\foreach \x in {1,2,3} \node[point] (x0\x) at (\x,0.5) {};
\foreach \x in {0,1,...,2} \draw[->] (x0\x) to +(0.9,0);
\draw[->] (x03) to (x10); \draw[->] (x03) to (xm10);
 \end{tikzpicture}
\end{gather*}
\end{Example}

Now def\/ine:
\begin{gather*}
J_\infty := \big( I * J^{* \ell - 1}\big)^{* \infty} := \big( I * J^{* \ell - 1}\big) * \big(I * J^{* \ell - 1}\big) * \big( I * J^{* \ell - 1}\big) * \cdots.
\end{gather*}
Recall the notion of component from above. We say that $v \in J_\infty$ is of \emph{wall type} if it lies in a component of type $I$ or is the source in a~component of type $J$. We say that $v$ is of \emph{almost corner type} if it is the source in a component of type $I$. (Thus if $v \in
J_\infty$ is of almost corner type then it is also of wall type.)

\begin{Example} We continue Example \ref{eg:IJJ} with $\ell = 3$. The points of wall type (resp. of almost corner type) in $J_\infty$
are depicted as open (resp.\ f\/illed) circles:
\begin{gather*}
 \begin{tikzpicture}[scale=0.7, point/.style={inner
 sep=0pt,circle,fill,minimum size=1mm},
wallpoint/.style={inner sep=0pt,circle,draw,minimum size=2mm}]
\begin{scope}[shift={(9,5)}]
\node[wallpoint,fill] (x50) at (0,0) {};
\foreach \x in {1,2,3} \node[wallpoint] (x5\x) at (\x,0) {};
\foreach \x in {0,1,2} \draw[->] (x5\x) to +(0.9,0);
\node (x5u) at (4,0.5) {\dots};
\node (x5d) at (4,-0.5) {\dots};
\draw[->] (x53) to (x5u); \draw[->] (x53) to (x5d);
\end{scope}
\begin{scope}[shift={(7,4)}]
\foreach \x in {0,1,...,5} \node[point] (x4\x) at (\x,0) {};
\foreach \x in {0,1,...,4} \draw[->] (x4\x) to +(0.9,0);
\node at (6,0) {$\dots$}; \draw[->] (x45) to +(0.6,0);
\draw[->] (x41) to[out=90,in=180] (x50);
\end{scope}
\begin{scope}[shift={(9,2)}]
\node[wallpoint,fill] (x20) at (0,0) {};
\foreach \x in {1,2,3} \node[wallpoint] (x2\x) at (\x,0) {};
\foreach \x in {0,1,2} \draw[->] (x2\x) to +(0.9,0);
\node (x2u) at (4,0.5) {\dots};
\node (x2d) at (4,-0.5) {\dots};
\draw[->] (x23) to (x2u); \draw[->] (x23) to (x2d);
\end{scope}
\begin{scope}[shift={(7,3)}]
\foreach \x in {0,1,...,5} \node[point] (x3\x) at (\x,0) {};
\foreach \x in {0,1,...,4} \draw[->] (x3\x) to +(0.9,0);
\node at (6,0) {$\dots$}; \draw[->] (x35) to +(0.6,0);
\node[wallpoint] (x3out) at (2,-1) {};
\draw[->] (x31) to[out=-90,in=180] (x3out);
\end{scope}
\begin{scope}[shift={(9,-1)}]
\node[wallpoint,fill] (x50) at (0,0) {};
\foreach \x in {1,2,3} \node[wallpoint] (x5\x) at (\x,0) {};
\foreach \x in {0,1,2} \draw[->] (x5\x) to +(0.9,0);
\node (x5u) at (4,0.5) {\dots};
\node (x5d) at (4,-0.5) {\dots};
\draw[->] (x53) to (x5u); \draw[->] (x53) to (x5d);
\end{scope}
\begin{scope}[shift={(7,-2)}]
\foreach \x in {0,1,...,5} \node[point] (xm3\x) at (\x,0) {};
\foreach \x in {0,1,...,4} \draw[->] (xm3\x) to +(0.9,0);
\node at (6,0) {$\dots$}; \draw[->] (xm35) to +(0.6,0);
\node[wallpoint] (x3mout) at (2,1) {};
\draw[->] (xm31) to[out=90,in=180] (x3mout);
\end{scope}
\begin{scope}[shift={(9,-4)}]
\node[wallpoint,fill] (x50) at (0,0) {};
\foreach \x in {1,2,3} \node[wallpoint] (x5\x) at (\x,0) {};
\foreach \x in {0,1,2} \draw[->] (x5\x) to +(0.9,0);
\node (x5u) at (4,0.5) {\dots};
\node (x5d) at (4,-0.5) {\dots};
\draw[->] (x53) to (x5u); \draw[->] (x53) to (x5d);
\end{scope}
\begin{scope}[shift={(7,-3)}]
\foreach \x in {0,1,...,5} \node[point] (xm4\x) at (\x,0) {};
\foreach \x in {0,1,...,4} \draw[->] (xm4\x) to +(0.9,0);
\node at (6,0) {$\dots$}; \draw[->] (xm45) to +(0.6,0);
\node[wallpoint] (x4mout) at (2,-1) {};
\draw[->] (xm41) to[out=-90,in=180] (x4mout);
\end{scope}
\node[wallpoint] (su) at (6,3.5) {};
\node[wallpoint] (sd) at (6,-2.5) {};
\draw[->] (su) -- (x40); \draw[->] (su) -- (x30);
\draw[->] (sd) -- (xm40); \draw[->] (sd) -- (xm30);
\begin{scope}[shift={(4,1)}]
\foreach \x in {0,1,...,8} \node[point] (x1\x) at (\x,0) {};
\foreach \x in {0,1,...,7} \draw[->] (x1\x) to +(0.9,0);
\node at (9,0) {$\dots$}; \draw[->] (x18) to +(0.6,0);
\draw[->] (x11) to[out=90,in=180] (su);
\end{scope}
\begin{scope}[shift={(4,0)}]
\foreach \x in {0,1,...,8} \node[point] (xm1\x) at (\x,0) {};
\foreach \x in {0,1,...,7} \draw[->] (xm1\x) to +(0.9,0);
\node at (9,0) {$\dots$}; \draw[->] (xm18) to +(0.6,0);
\draw[->] (xm11) to[out=-90,in=180] (sd);
\end{scope}
\node[fill,wallpoint] (x00) at (0,0.5) {};
\foreach \x in {1,2,3} \node[wallpoint] (x0\x) at (\x,0.5) {};
\foreach \x in {0,1,...,2} \draw[->] (x0\x) to +(0.9,0);
\draw[->] (x03) to (x10); \draw[->] (x03) to (xm10);
 \end{tikzpicture}
\end{gather*}
\end{Example}

\subsection{The map} Fix an almost corner $\l$. We will construct a map
\begin{gather*}
\Phi = \Phi_\l \colon \ J_\infty \to \Chi.
\end{gather*}
It will have the following two properties:
\begin{enumerate}\itemsep=0pt
\item If $v \to v'$ in $J_\infty$ then either $\Phi(v) = \Phi(v')$ or $\Phi(v) \to \Phi(v')$ in $\Chi$.
\item For $v \in J_\infty$, $\Phi(v)$ is a wall point (resp.\ an almost corner) if and only if $v$ is of wall type (resp.\ of almost corner type).
\end{enumerate}

Because $J_\infty$ is the union of components of types $I$ or $J$ it will suf\/f\/ice to def\/ine the image of~$\Phi$ on the source of $J_\infty$, and then def\/ine it on each component of type $I$ or $J$ inductively. This is what we do now:

\emph{Rule $0$:} Let $v_0$ denote the source of $J_\infty$. Set $\Phi(v_0) = \l$ (our f\/ixed almost corner).

\emph{Rule $1$:} Let $I' \subset J_\infty$ denote a component of type $I$ and let $v_0'$ denote the source of $I'$. Suppose that $\mu :=\Phi(v_0')$ is def\/ined, but that $\Phi$ is not def\/ined on the rest of $I'$. We def\/ine $\Phi$ on $I'$ as follows:
\begin{gather*}
\mu_1 \to \mu_1 \to \mu_2 \to \dots \to \mu_{\ell-2} \to \mu_{\ell-1} \to
\mu_{\ell-1}.
\end{gather*}
Here $\mu = \mu_1, \mu_2, \dots, \mu_\ell$ are uniquely determined by the requirement that $\mu = \mu_1$ and $\mu_1 \to \mu_2 \to \cdots \to
\mu_{\ell-1}$ are all wall points. (The fact that $\mu_1, \dots,\mu_{\ell-1}$ are well def\/ined is a consequence of property~(2) above: $\Phi(v_0')$ is an almost corner.)

\emph{Rule $2$:} Let $J' \subset J_\infty$ denote a component of type $J$ and let $v_0'$ denote the source of $J'$. Suppose that $\mu :=\Phi(v_0')$ is def\/ined, but that $\Phi$ is not def\/ined on all of $J'$.

By property (2) above $\mu$ is a wall-point but is not an almost corner. Hence there exist two edges $d', d''$ with source $\mu$ and image an $\ell$-regular point. For $d \in \{ d' , d'' \}$, def\/ine a sequence $\{ (\mu_i,d_i) \}_{i \ge 0}$ as follows:
\begin{enumerate}\itemsep=0pt
\item[1)] $(\mu_0, d_0) := (\mu, d)$;
\item[2)] if $\mu_i + d_i$ is an interior point, def\/ine $(\mu_{i+1}, d_{i+1}) := (\mu_i + d_i, d_i)$;
\item[3)] if $\mu_i + d_i$ is a wall point, def\/ine $(\mu_{i+1}, d_{i+1}) := (\mu_i, r(d_i))$, where $r$ denotes the ref\/lection in the
 (unique) wall containing $\mu_i + d_i$.
\end{enumerate}
Denote by $\{ (\mu'_i,d'_i) \}_{i \ge 0}$ (resp.\ $\{ (\mu''_i,d''_i)) \}_{i \ge 0})$ the sequences associated to $d = d'$ (resp.\ $d = d''$).

We def\/ine $\Phi$ on $J'$ as follows:
\begin{gather*}
\begin{tikzpicture}[scale=1.6]
\node (a0) at (0,0) {$\mu'_0 = \mu''_0$};
\node (a1) at (1,0.4) {$\mu'_1$};
\node (a1p) at (1,-0.4) {$\mu''_1$};
\node (a2) at (2, 0.4) {$\mu'_2 $};
\node (a2p) at (2, -0.4) {$\mu''_2$};
\node (a3) at (3,0.4) {\dots};
\node (a3p) at (3,-0.4) {\dots};
\node (a4) at (4, 0.4) {$\mu'_{\ell-1}$};
\node (a4p) at (4, -0.4) {$\mu''_{\ell-1}$};
\node (a5) at (5, 0.4) {$\mu'_\ell$};
\node (a5p) at (5, -0.4) {$\mu''_{\ell}$};
\node (a6) at (6, 0.4) {$\mu'_{\ell+1}$};
\node (a6p) at (6, -0.4) {$\mu''_{\ell + 1}$};
\node (a7) at (7, 0.4) {\dots};
\node (a7p) at (7, -0.4) {\dots};
\draw[->] (a0) -- (a1); \draw[->] (a1)-- (a2); \draw[->] (a2) -- (a3);
\draw[->] (a3) -- (a4); \draw[->] (a4) --(a5); \draw[->] (a5) --(a6);
\draw[->] (a6) --(a7);
\draw[->] (a0) -- (a1p); \draw[->] (a1p)-- (a2p); \draw[->] (a2p) -- (a3p);
\draw[->] (a3p) -- (a4p); \draw[->] (a4p) --(a5p); \draw[->] (a5p) --(a6p); \draw[->] (a6p) --(a7p);
\node (a5u) at (5,1) {$\mu'_{\ell-1} + d'_{\ell-1}$};
\node (a5up) at (5,-1) {$\mu''_{\ell-1} + d''_{\ell-1}$};
\draw[->] (a4) -- (a5u); \draw[->] (a4p) -- (a5up);
\end{tikzpicture}
\end{gather*}
(Note that $\mu'_{\ell-1} + d'_{\ell-1}$ and $\mu''_{\ell-1} + d''_{\ell-1}$ are wall points.)

In order for the above def\/inition to be well def\/ined the we should check that properties~(1) and~(2) are satisf\/ied at each step. properties~(1) and~(2) for wall points are immediate from the def\/initions. Property (2) for almost corner points follows from the following observation: suppose that $J' \subset J_\infty$ is a component of type $J$, let $v_0'$ denote its source, and let $v_1'$, $v_1''$ denote the two sinks (so that $v_1'$ and $v_1''$ are of wall type in $J_\infty$). Then if $\Phi(v_0')$ is of distance $k \le \ell - 1$ from a corner point, then $\Phi(v_1')$ and $\Phi(v_1'')$ are of distance $k-1$ from a corner point.

\begin{Example}
We illustrate $\Phi$ on a~component of type $J$ with $\ell = 5$ (the source is marked with a circle):
\begin{gather*}
\resizebox{14cm}{!}{
\begin{tikzpicture}
[xscale=3.5,yscale=3.5, point/.style={inner sep=0pt,circle,fill,minimum size=1mm,color=gray}]
\node (x00) at (0,0) [point] {}; \node[fill] (x10) at (1,0) [point] {}; \node (x20) at (2,0) [point] {}; \node[fill] (x30) at (3,0) [point] {};
\node (x40) at (4,0) [point] {}; \node[fill] (x50) at (5,0) [point] {};
\begin{scope}[shift={(60:1)}]
\node (x01) at (0,0) [point] {}; \node[fill] (x11) at (1,0) [point] {}; \node (x21) at (2,0) [point] {}; \node[fill] (x31) at (3,0) [point] {};
\node (x41) at (4,0) [point] {};
\begin{scope}[shift={(2,0)}]
\node (a2) at (-60:0.2) {$*$};
\node (a2b) at (-90:0.075) {$\rotatebox{0}{\dots}$};
\node (a3) at (-120:0.2) {$*$};
\end{scope}
 \end{scope}
\begin{scope}[shift={(60:2)}]
\node (x02) at (0,0) [point] {}; \node[fill] (x12) at (1,0) [point]
{}; \node (x22) at (2,0) [point] {}; \node[fill] (x32) at (3,0)
[point] {};
\begin{scope}[shift={(1,0)}]
\node (a4) at (180:0.2) {$*$};
\node (a5) at (120:0.2) {$*$};
\end{scope}
\begin{scope}[shift={(2,0)}]
\node (a1b) at (0:0.2) {$*$};
\node (a1a) at (0:0.3) {$*$};
\node (a6) at (60:0.2) {$*$};
\end{scope}
 \end{scope}
\begin{scope}[shift={(60:3)}]
\node (x03) at (0,0) {*}; \node[fill] (x13) at (1,0) [point]
{}; \node (x23) at (2,0) {};
 \end{scope}
\begin{scope}[shift={(60:4)}]
\node (x04) at (0,0) [point] {}; \node (x14) at (1,0) [point] {};
 \end{scope}
\begin{scope}[shift={(60:5)}]
\node (x05) at (0,0) [point] {};
 \end{scope}
\draw[->] (a1a) to (a2); \draw[->] (a2) to (a3);
\draw[->] (a3) to (a4); \draw[->] (a4) to (a5);
\draw[->] (a5) to (a6); \draw[->] (a6) to (a1b);
\draw[->] (a1b) to ($(a2b)+(0.09,0.05)$);
\draw[->,color=gray!60] (x50) -- (x41);
\draw[->,color=gray!60] (x41) -- (x32);
\draw[->,color=gray!60] (x32) -- (x23);
\draw[->,color=gray!60] (x23) -- (x14);
\draw[->,color=gray!60] (x14) -- (x05);
\draw[->,color=gray!60] (x50) -- (x40);
\draw[->,color=gray!60] (x40) -- (x30);
\draw[->,color=gray!60] (x30) -- (x20);
\draw[->,color=gray!60] (x20) -- (x10);
\draw[->,color=gray!60] (x10) -- (x00);
\draw[->,color=gray!60] (x00) -- (x05);
\draw[->] (a4) -- (x03);
%
\begin{scope}[shift={(60:5)},xscale=1,yscale=-1]
\node (x00) at (0,0) [point] {}; \node[fill] (x10) at (1,0) [point] {}; \node (x20) at (2,0) {*}; \node[fill] (x30) at (3,0) [point] {};
\node (x40) at (4,0) [point] {}; \node[fill] (x50) at (5,0) [point] {};
\begin{scope}[shift={(60:1)}]
\node (x01) at (0,0) [point] {}; \node[fill] (x11) at (1,0) [point] {}; \node (x21) at (2,0) [point] {}; \node[fill] (x31) at (3,0) [point] {};
\node (x41) at (4,0) [point] {};
\begin{scope}[shift={(2,0)}]
\node (a2) at (-60:0.2) {$*$};
\node (a3) at (-120:0.2) {$*$};
\end{scope}
 \end{scope}
\begin{scope}[shift={(60:2)}]
\node (x02) at (0,0) [point] {}; \node[fill] (x12) at (1,0) [point]
{}; \node (x22) at (2,0) [point] {}; \node[fill] (x32) at (3,0)
[point] {};
\begin{scope}[shift={(1,0)}]
\node (a4) at (180:0.2) {$*$};
\node (a5) at (120:0.2) {$*$};
\node (a5b) at (120:0.1) {$*$};
\end{scope}
\begin{scope}[shift={(2,0)}]
\node (a1c) at (0:0.3) {$*$};
\node (a6) at (60:0.2) {$*$};
\end{scope}
 \end{scope}
\begin{scope}[shift={(60:3)}]
\node (x03) at (0,0) [point] {}; \node[fill] (x13) at (1,0) [point]
{}; \node (x23b) at (2,0) {};
 \end{scope}
\begin{scope}[shift={(60:4)}]
\node (x04) at (0,0) [point] {}; \node (x14) at (1,0) [point] {};
 \end{scope}
\begin{scope}[shift={(60:5)}]
\node (x05) at (0,0) [point] {};
 \end{scope}
\draw[->] (a1c) to (a2); \draw[->] (a2) to (a3);
\draw[->] (a3) to (a4); \draw[->] (a4) to (a5b);
\draw[->] (a5) to (a6); \draw[->] (a6) to (a1c);
\node at ($(a6)+(0.03,-0.14)$) {\rotatebox{40}{\dots}};
\draw[->] (a5b) to ($(a6)+(-0.1,-0.08)$);
\draw[->,color=gray!60] (x50) -- (x41);
\draw[->,color=gray!60] (x41) -- (x32);
\draw[->,color=gray!60] (x32) -- (x23b);
\draw[->,color=gray!60] (x23b) -- (x14);
\draw[->,color=gray!60] (x14) -- (x05);
\draw[->,color=gray!60] (x50) -- (x40);
\draw[->,color=gray!60] (x40) -- (x30);
\draw[->,color=gray!60] (x30) -- (x20);
\draw[->,color=gray!60] (x20) -- (x10);
\draw[->,color=gray!60] (x10) -- (x00);
\draw[->,color=gray!60] (x00) -- (x05);
\draw[->] (a2) -- (x20);
\end{scope}
\node[circle,draw,fill=white] (n0) at (x23) {};
\draw[->] (n0) to[out=0,in=180] (a5);
\draw[->] (n0) to[out=-120,in=60] (a1a);
 \end{tikzpicture}
}
\end{gather*}
Note the similarity to the previous section. The only dif\/ference is the arrow joining an $\ell$-regular point to a wall point.
\end{Example}

\subsection[Extending $\Phi$]{Extending $\boldsymbol{\Phi}$} Let $v_0$ denote the source of~$J_\infty$. For any choice of label $n\big(v^{k_0}\big)$, we inductively extend $\Phi$ to produce a map
\begin{gather*}
\widetilde{\Phi} = \widetilde{\Phi}_{\l, n(v^{k_0})}\colon \ J_\infty \to \Chi \times \MC
\end{gather*}
such that $\widetilde{\Phi}(v_0) = \big(\l, n\big(v^{k_0}\big)\big)$. We proceed as follows: Suppose that $v \to v'$ in~$J_\infty$ and that $\widetilde{\Phi}$ is def\/ined on $v$ but not on $v'$, and let $\widetilde{\Phi}(v) = \big(\Phi(v), m\big(v^k\big)\big)$. Then
$\widetilde{\Phi}(v') = \big(\Phi(v'), m'\big(v^{k'}\big)\big)$ with
\begin{gather*}
m' := \begin{cases} m + 2 & \text{if $\Phi(v) \ne \Phi(v')$,} \\ m + 3 &
 \text{if $\Phi(v) = \Phi(v')$} \end{cases}
\end{gather*}
and
\begin{gather*}
k' := \begin{cases} k & \text{if $\Phi(v) \ne \Phi(v')$,}
\\ k + 1 & \begin{array}{@{}l@{}} \text{if $\Phi(v) = \Phi(v')$ and} \\\text{$v \to v'$ belongs to a
 component of type $I$}, \end{array}
\\ k \pm 1 & \begin{array}{@{}l@{}} \text{if $\Phi(v) = \Phi(v')$ and} \\\text{$v \to v'$ belongs to a
 component of type $J$}. \end{array}
\end{cases}
\end{gather*}
The sign ambiguity in the f\/inal case is resolved as follows: suppose that $J' \subset J_\infty$ is a component of type $J$, $v_0'$ is its initial vertex, and $\widetilde{\Phi}(v_0') = \big( \mu, m\big(v^k\big)\big)$. Then $\widetilde{\Phi}$ takes values in $\Chi \times \ZM_{\ge 0} \times \big\{ v^k, v^{k+1} \big\}$ on $J'$.

\subsection{Pruning}
Consider our map
\begin{gather*}
\Phi = \Phi_\l\colon \ J_\infty \to \Chi
\end{gather*}
(which depended on the choice of a f\/ixed almost corner $\l$). We now ``prune'' our tree $J_\infty$ to produce a new tree~$J_\l$. Consider the set
\begin{gather*}
K_\l := \left \{ v \in J_\infty \, \middle | \, \begin{array}{@{}c@{}} \text{there exists }v_1, \dots,
v_m\text{ such that } \Phi(v_1) \notin \Chi_{++} \\ \text{ and }v_1 \to
v_2 \to \dots \to v_m = v \end{array} \right \}
\end{gather*}
and def\/ine $J_\l$ to be the full subgraph with vertices $J_\infty
\setminus K_\l$. (That is, we remove all branches from $J_{\infty}$
that contain elements which are mapped to weights which are not
strictly dominant.) The restriction of
$\Phi$ to $J_\l$ def\/ines a map
\begin{gather*}
\Phi \colon \ J_{\l} \to \Chi_{++}.
\end{gather*}
Restricting the extension $\widetilde{\Phi}$ (which depended on an additional choice of label $n\big(v^k\big)$) of~$\Phi$ to~$J_{\infty,++}$ yields a map:
\begin{gather*}
\widetilde{\Phi} = \widetilde{\Phi}_{\l,n(v^k)}\colon \ J_\l \to \Chi_{++} \times \MC.
\end{gather*}

\subsection{The alternative construction} For any positive
integer $m$ let
\begin{gather*}
 \l_m := (\ell - 1)m\varpi_1 + \varpi_2 \qquad \text{and} \qquad g_m := \big(\l_m, (2m\ell - 1)\big(v^{-1}\big)\big).
\end{gather*}
The previous constructions provide us with a rooted tree $J_{\l_m}$ and a map
\begin{gather*}
 \widetilde{\Phi}_{g_m} \colon \ J_{\l_m} \to \Chi_{++} \times \MC.
\end{gather*}
Consider the union of multisets
\begin{gather*}
Z' := \bigcup_{m \ge 1} \big\{ \widetilde{\Phi}_{g_m}(v) \, | \, v \in J_{\l_m} \big\}.
\end{gather*}
The following lemma implies that the above algorithm provides an alternative construction of~$\widetilde{Z}$.
\begin{Lemma} We have $\widetilde{Z} = Z' \setminus \{ g_m \, | \, m \in \ZM_{\ge 0} \}$.
\end{Lemma}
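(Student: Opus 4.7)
The plan is to establish the equality by showing, for each $m \geq 1$, that the multiset $\widetilde{\Phi}_{g_m}(J_{\lambda_m})$ coincides with the portion of $Z$ produced by iterating Steps 2 and 3 of Section~\ref{sec:wallbilliards}--\ref{sec:alcovebilliards} starting from the seed $(m\ell\varpi_1, 2m\ell(v^0)) \in X$, and then checking that the resulting union structures match.

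First I would match combinatorial building blocks. A component of type $I$ in $J_\infty$, under Rule~1 of $\Phi$, traces precisely case~(2) of wall dynamics for an almost-corner seed: the sequence $\mu_1 \to \mu_1 \to \mu_2 \to \cdots \to \mu_{\ell-1} \to \mu_{\ell-1}$ corresponds to \emph{rest, $\ell-2$ small steps along the wall, rest}. A component of type $J$, under Rule~2, encodes the alcove-billiards algorithm of Section~\ref{sec:alcovebilliards}: the two branches correspond to the two dominant $\ell$-alcoves adjacent to the wall-seed, and each branch records the billiard bounces via the reflections in Rule~2 that mirror the updates of $\curdir$ and $\cursign$. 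The global concatenation $J_\infty = (I * J^{*\ell-1})^{*\infty}$ then reflects the fact that between consecutive almost corners along a wall there are exactly $\ell-1$ regular wall-seeds, each launching a pair of alcove billiards (the $J^{*\ell-1}$ factors), interleaved with the wall-propagation step that produces the next almost-corner seed (the $I$ factor).

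Second I would verify that $\widetilde{\Phi}$ correctly assigns labels. The dichotomy $m' = m + 2$ (when $\Phi(v) \neq \Phi(v')$) vs.\ $m' = m + 3$ (when $\Phi(v) = \Phi(v')$) matches the integer-label rules for small steps and rests in Sections~\ref{sec:wallbilliards} and~\ref{sec:alcovebilliards}; the three-case rule for $k'$ captures the $v$-power behaviour, noting that rests inside type $I$ components always shift the power up by one (wall rests), while rests inside type $J$ components alternate by $\pm 1$ in a pattern dictated by the wall of the alcove that is hit, which is precisely the role of $\cursign$. The formal seed $g_m = (\lambda_m, (2m\ell-1)(v^{-1}))$ is a bookkeeping device chosen so that one edge along the source of the first type $I$ component (a rest, increment $+3(v^{+1})$) produces the label $(2m\ell+2)(v^0)$; this coincides with the first \emph{actual} labelled point that wall dynamics outputs after the corner seed $(m\ell\varpi_1, 2m\ell(v^0))$ takes one small step toward~$\lambda_m$.

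Finally I would reconcile the global and set-theoretic bookkeeping. Taking the union over $m \geq 1$ corresponds to iterating over the seeds of $X$ indexed by $k > 0$. The pruning $J_{\lambda_m} = J_\infty \setminus K_{\lambda_m}$ enforces that the image lies in $\Chi_{++} \times \MC$, matching the observation after Example~\ref{ex:Z}. The points $g_m$ themselves are not in $\widetilde{Z}$ (the weight $\lambda_m$ does appear, but with a label coming from the first true rest, not with the virtual label $(2m\ell-1)(v^{-1})$), which accounts for the removal $Z' \setminus \{g_m\}$; conversely, the subtraction of $X$ from $Z$ in the definition of $\widetilde{Z}$ is automatic on the tree side because the first step of the recursion $\widetilde{\Phi}_{g_m}$ already moves off the $\varpi_1$-wall. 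The main obstacle I expect is a careful matching of multiplicities: the exponential growth in Remark~\ref{rem:growth} arises because distinct vertices in $J_{\lambda_m}$ (and distinct values of $m$) can map to the same labelled point, and one must check that the resulting confluence matches the seed-production rules of the first construction exactly. This should amount to a structural induction on the depth of iteration, together with a careful tracking of the direction variables $d'$, $d''$ in Rule~2 against $\curdir$.
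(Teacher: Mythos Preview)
Your overall strategy---decompose $J_\infty$ into its $I$- and $J$-components and match each to a step of the original algorithm---is the same as the paper's. There is, however, a genuine gap in your correspondence.

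You identify a type-$J$ component entirely with the alcove billiards of Section~\ref{sec:alcovebilliards}. This accounts only for the two infinite branches of $J$; it overlooks the two \emph{sinks} of $J$ at distance $\ell$ from the source, which under $\Phi$ land on wall points $\mu'_{\ell-1}+d'_{\ell-1}$ and $\mu''_{\ell-1}+d''_{\ell-1}$. These sinks are exactly what get glued to the sources of the next component in the concatenation, and the paper's key observation (its point~(3)) is that passing from $\widetilde{\Phi}(\text{source of }J)$ to $\widetilde{\Phi}(\text{sink of }J)$ is precisely a \emph{giant leap} in the sense of case~(3) of Section~\ref{sec:wallbilliards}. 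This is the mechanism by which the wall dynamics propagates from one regular wall-seed to the next, and without it your sketch cannot explain how the $\ell-1$ sources appearing in a block $J^{*(\ell-1)}$ are related to one another on the $Y$-side. Consequently your sentence attributing ``the wall-propagation step'' to the $I$-factor is misplaced: the $I$-factor handles only the almost-corner transition (case~(2)), while propagation among the intermediate regular wall-seeds is carried by the $J$-sinks via giant leaps. The paper then separately matches the remaining subgraph $J_{\int}$ (the two infinite rays with the sinks deleted) to the alcove billiards, which is the part you did identify.

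A minor point: the paper matches the \emph{initial} $I$-component to case~(1) of wall dynamics (the corner step from $(m\ell\varpi_1, 2m\ell(v^0))$) rather than to case~(2) from the virtual seed $g_m$. Your bookkeeping via $g_m$ produces the same labelled points after removing $g_m$---indeed the virtual seed is rigged for exactly this---so this is not an error, just a different way of organising the first block.
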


\begin{proof}[Proof (sketch)]
Fix $m \!\ge\! 0$ and consider $\widetilde{\Phi} \!:= \!\widetilde{\Phi}_{g_m}$ as def\/ined above. Also, set $q {=} \big(\ell m \varpi_1, 0\big(v^0\big)\big)$ and consider the set $\Lambda_q$ def\/ined as above. We claim that we have
\begin{gather} \label{eq:eq}
\Lambda_q \setminus \{ q \} = \big\{ \widetilde{\Phi}_{g_m}(v) \, | \, v \in J_{\ell m \varpi_1}\big\} \setminus \{ g_m \},
\end{gather}
which implies the lemma. The equality \eqref{eq:eq} follows from the following local considerations:
\begin{enumerate}\itemsep=0pt
\item The restriction of $\widetilde{\Phi}$ to the initial component of type $I$ takes the same values as part (1) of ``Dynamics on walls'', except for the f\/irst two values ($g_m$ and $q$ respectively) which are removed in~\eqref{eq:eq}.
\item The restriction of $\widetilde{\Phi}$ to any component of type~$I$ other than the initial segment takes the same values as those produced by part (2) of ``Dynamics on the walls''.
\item If $v$ (resp.~$v'$,~$v''$) denotes the source (resp.\ sinks) of a~component of type $J$ then $\widetilde{\Phi}(v')$ and~$\widetilde{\Phi}(v'')$ are obtained from $\widetilde{\Phi}(v)$ by a~giant leap, i.e., part~(3) of ``Dynamics on the walls''.
\item For each component of type $J$ in $J_\infty$, if we consider the full subgraph
\begin{gather*}
\begin{tikzpicture}[point/.style={inner sep=0pt,circle,fill,minimum size=1mm,color=black}]
\node[point] (a0) at (0,0) {};
\node[point] (a1) at (1,0.4) {};
\node[point] (a1p) at (1,-0.4) {};
\node[point] (a2) at (2, 0.4) {};
\node[point] (a2p) at (2, -0.4) {};
\node (a3) at (3,0.4) {\dots};
\node (a3p) at (3,-0.4) {\dots};
\node[point] (a4) at (4, 0.4) {};
\node[point] (a4p) at (4, -0.4) {};
\node[point] (a5) at (5, 0.4) {};
\node[point] (a5p) at (5, -0.4) {};
\node[point] (a6) at (6, 0.4) {};
\node[point] (a6p) at (6, -0.4) {};
\node (a7) at (7, 0.4) {\dots};
\node (a7p) at (7, -0.4) {\dots};
\draw[->] (a0) -- (a1); \draw[->] (a1)-- (a2); \draw[->] (a2) -- (a3);
\draw[->] (a3) -- (a4); \draw[->] (a4) --(a5); \draw[->] (a5) --(a6);
\draw[->] (a6) --(a7);
\draw[->] (a0) -- (a1p); \draw[->] (a1p)-- (a2p); \draw[->] (a2p) -- (a3p);
\draw[->] (a3p) -- (a4p); \draw[->] (a4p) --(a5p); \draw[->] (a5p) --(a6p); \draw[->] (a6p) --(a7p);
\node at (-1,0) {$J_\int :=$};
\end{tikzpicture}
\end{gather*}
then the restriction of $\widetilde{\Phi}$ to each branch of $J_\int$ produces the same set as the algorithm in Section~\ref{sec:alcovebilliards} (``Billiards in an alcove''). \hfill $\qed$
\end{enumerate}\renewcommand{\qed}{}
\end{proof}

\section{The conjecture} \label{sec:conj}

Recall the notion of an $\ell$-alcove in $\Chi_\RM$ from above. An \emph{alcove} is a $1$-alcove. The alcove $\AC_0 := \{ \l \, | \, 0 <
\langle \alpha^\vee, \l \rangle < 1 \text{ for all } \alpha^\vee \in \Phi_+^\vee \}$ is the \emph{fundamental alcove}. The map $x
\mapsto x\AC_0$ gives a~bijection between the af\/f\/ine Weyl group $\Wa$ and the set of alcoves. It restricts to a bijection between $\f\Wa$ and the dominant alcoves.

Given $\mu \in \Chi$ the open box
\begin{gather*}
B_\mu := \big\{ \l \in \Chi_\RM \, | \, \langle \a^\vee , \mu \rangle < \langle \a^\vee , \l \rangle <
\langle \a^\vee , \mu \rangle + 1 \text{ for all $\a^\vee \in \Sigma^\vee$} \big\}
\end{gather*}
contains exactly 2 alcoves. In this way we obtain a map $\Wa \to \Chi$ by sending $x\in\Wa$ to the unique $\mu \in \Chi$ such that $B_\mu$ contains $x\AC_0$. It restricts to a map
\begin{gather*}
\kappa \colon \ \f\Wa \to \Chi_+.
\end{gather*}

Let $s_0$ denote the simple af\/f\/ine ref\/lection. Consider the elements
\begin{gather*}
x_0 := \id, \ x_1 = s_0, \ x_2 = s_0s_1, \ x_3 = s_0s_1s_2, \ x_4 := s_0s_1s_2s_0, \ \dots
\end{gather*}
of $\Wa$. (These are the alcoves along one edge of the dominant cone.) We have $\kappa(x_{2i}) = \kappa(x_{2i+1}) = i\varpi_1$ for all $i \ge 0$.

We now describe a sort of inverse to $\kappa$. Recall that $\Chi_{++} = \ZM_{>0} \varpi_1 \oplus \ZM_{>0} \varpi_2$ denotes the strictly dominant weights. Consider $\mu \in \Chi_{++}$ and let $x, x'$ denote the two elements of $\f\Wa$ indexing alcoves contained in $B_\mu$. If $\RC(z) = \{ s \in \Sa \, | \, zs < z \}$ denotes the right descent set then it is easy to see that we have
\begin{gather*}
\RC(x) \sqcup \RC(x') = \{ s_0, s_1, s_2 \} = \Sa \quad
\text{(disjoint union)}.
\end{gather*}
It follows that for any pair $\mu \in \Chi_{++}, s \in \Sa$ there is a unique element $x_\mu^s \in \f\Wa$ such that $s \in \RC(x_\mu^s)$ and $\mu =
\kappa(x_\mu^s)$.

Recall our prime $p$ from above, and consider the multiset $\widetilde{Z}$ constructed in the previous section with $\ell = p$. We now describe how to use $\widetilde{Z}$ to def\/ine new elements in the anti-spherical module $\AntiS_v$. Consider the $\ZM$-linear map $\varphi \colon \Zvv \to \Zv$
given by $v^0 \mapsto 1$ and $v^i \mapsto v^i + v^{-i}$ for $i > 0$. Set $\p\zeta_0 := \un{n}_{x_0}$. For any $i > 0$ let $s \in \Sa$ denote the unique element of $\RC(x_i)$ and consider the element:
\begin{gather*}
\p\zeta_i := \un{n}_{x_i} + \sum_{(\mu,n(v^k)) \in \widetilde{Z} ; \atop n \in \{i, i-1,i-2 \} } \varphi\big(v^k\big) \un{n}_{x_\mu^s}.
\end{gather*}

\begin{Conjecture} We have:
 \begin{enumerate}\itemsep=0pt
 \item[$1)$] $\p\zeta_i = \p\un{n}_{x_i}$ for $0 \le i < 2p(p+1)$;
 \item[$2)$] $\p\zeta_i = \p\un{n}^2_{x_i}$ for all $0 \le i$.
 \end{enumerate}
\end{Conjecture}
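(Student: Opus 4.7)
The plan is to first reduce part~(1) to part~(2): by property~(3) of the generations for the $p$-canonical basis in Section~\ref{sec:genpcan}, the second-generation element $\p\un{n}^2_{x_i}$ already equals $\p\un{n}_{x_i}$ once $\langle \alpha_0^\vee, x_i \pdot 0 + \rho \rangle \le p^{3}$, and the bound $i<2p(p+1)$ is precisely the range along the edge $x_0,x_1,x_2,\dots$ in which this inequality holds. So I would focus entirely on part~(2), arguing by induction on $i$. The inductive step amounts to showing that multiplication by the Kazhdan--Lusztig generator $\underline b_s$ (with $s$ the unique element of $\RC(x_i)$) sends $\p\zeta_{i-1}$ to $\p\zeta_i$ plus a positive $\ZM_{\ge 0}[v^{\pm 1}]$-combination of strictly lower $\p\zeta_j$'s, inside the (still conjectural) ambient second-generation category. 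In other words, the new labelled points in $\widetilde Z$ with $n \in \{i,i-1,i-2\}$ must correspond exactly to the new indecomposable summands of $\p\zeta_{i-1} \cdot \underline b_s$.

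To set this up, I would work inside the two localizations $\ASCat_{\ZM_p}\rightsquigarrow\ASCat_{\QM_p}\rightsquigarrow Q\otimes_R\ASCat_{\QM_p}$ from Section~3. Over the semisimple localization the action of $\underline b_s$ reduces to matrix manipulations in the explicit quiver description available for $\SL_3$, and idempotents carving out the indecomposable summands can be exhibited directly. Descending to $\ASCat_{\ZM_p}$ amounts to tracking the $p$-adic filtration on the light leaves basis, and the labels $n(v^k)$ should then record bidegree data. The three wall-dynamics moves (small step, rest, giant leap) should correspond naturally to the three local possibilities when translating near a wall---crossing cleanly, bouncing off a corner, splitting at an almost-corner---while the alcove billiards of Section~\ref{sec:alcovebilliards} should encode iterated translation inside one $\ell$-alcove. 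The alternative tree construction via $\Phi\colon J_\infty\to\Chi$ is especially convenient here, since each type-$I$ or type-$J$ component of $J_\infty$ packages exactly one generation-2 translation; the induction can then be organized by the tree structure rather than by the numerical index~$i$.

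The main obstacle is twofold. First, the second-generation basis $\p\un{n}^2_x$ is itself only conjecturally well-defined in type $\widetilde A_2$; just formulating part~(2) as a precise assertion requires constructing an ambient category with a two-step Steinberg tensor product theorem, along the lines indicated in Remark~\ref{rem:generations} via the work of Elias and of Riche--Williamson. Second, the exponential growth of multiplicities highlighted in Remark~\ref{rem:growth} rules out any short closed formula for $\p\un{n}^2_x$, so the inductive match with the billiards must be structural rather than coefficient-by-coefficient. In particular one must rule out unexpected cancellations or extra summands when descending from generation~2 to generation~1, and it is in this purely categorical verification---that the local billiards moves are the only combinatorics governing $\underline b_s$---that I expect the real difficulty to lie.
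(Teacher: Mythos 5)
The statement you were given is a \emph{Conjecture}: the paper contains no proof of it. The only evidence offered is that part~(1) has been verified by computer for $p=3,5$ and in many cases for $p=7$, by directly computing the $p$-canonical basis with the localisation algorithm of Section~3, and the authors explicitly remark that part~(2) should be read as giving a \emph{formula} for the elements $\p\un{n}^2_{x_i}$, ``which currently have no other rigorous definition.'' So there is no argument in the paper to compare yours against, and your text is a research programme rather than a proof.

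Judged on its own terms, two concrete gaps prevent your outline from being a proof even in sketch form. First, your induction for part~(2) takes place ``inside the (still conjectural) ambient second-generation category''; since neither that category nor any other rigorous definition of $\p\un{n}^2_x$ satisfying properties (1)--(4) of Section~2.4 is known to exist, the statement being proved is not yet well posed --- you flag this yourself, but it means the induction cannot begin, and nothing in your proposal constructs the missing object. Second, the reduction of (1) to (2) rests on the generational properties of Section~2.4, which are themselves only ``expected''; the relevant one ($\p\un{n}_x^\infty=\p\un{n}_x$) is stated only for $p$ large, whereas part~(1) is asserted and checked already for $p=3$. Moreover, along the edge one has $\langle\alpha_0^\vee, x_i\pdot 0+\rho\rangle\le p^3$ roughly for $i<2p^2$, not for $i<2p(p+1)$, so the bound in part~(1) is not ``precisely'' the generalised Jantzen range and the boundary region would need a separate explanation. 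Your proposed mechanism --- matching the wall-dynamics and billiards moves with the indecomposable summands of $\p\zeta_{i-1}\cdot\un{h}_s$ via the localisations $\ASCat_{\ZM_p}\rightsquigarrow\ASCat_{\QM_p}\rightsquigarrow Q\otimes_R\ASCat_{\QM_p}$ --- is a sensible guess at how a proof might eventually be organised and is consistent in spirit with how the authors generated their data, but none of its steps are carried out, and the genuinely hard points (existence of the second generation, absence of cancellation) are exactly the ones deferred.
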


\begin{Remark} Some remarks on the conjecture:
 \begin{enumerate}\itemsep=0pt
 \item For an example of $\widetilde{Z}$ the reader is referred to Example~\ref{ex:Z}.
 \item It is a nice exercise to compare our conjecture to the results of J.G.~Jensen~\cite{JensenTilting} and Par\-ker~\cite{Parker}. For example, in Fig.~\ref{fig:p5}, the results of Jensen and Parker are explained by the unique alcoves labelled $12(1)$, $14(1)$, $16(1)$, $18(1)$ and~$21(v)$.
 \item Our conjecture (in particular the def\/inition of the set $\widetilde{Z}$) does not seem to make sense for $p = 2$.
 \item We have verif\/ied part (1) of the conjecture for $p = 3, 5$ and in many cases for $p = 7$ by computer. (In fact these calculations led to the conjecture.)
 \item To determine the $p$-canonical basis in $\AntiS_v$ it is enough to know the elements $\p\un{n}_{x_i}$ for all $i > 0$. (After exploiting the automorphism $s_0 \mapsto s_0$, $s_1 \mapsto s_2$, $s_2 \mapsto s_1$ this can be deduced from a~$v$-analogue of the fact that one can apply the tilting tensor product theorem to determine all tilting characters, provided one knows the tilting characters along the walls and in the $(p-1)\rho$-shift of the fundamental box, see \cite[Section~1.6]{WTakagi}. One can check by hand that one has
\begin{gather*}
\p\un{n}_x = \un{n}_x \qquad \text{for all $x \in \{ \id, s_0,
 s_0s_1s_2s_1, s_0s_1s_2s_1s_0 \}$}
\end{gather*}
and so the only remaining cases are $\p\un{n}_{x_i}$ for $i > 1$.)
\item Similarly, to know all elements $\p\un{n}^2_{x}$ for $x \in \f\Wa$ it should be enough to know $\p\un{n}^2_{x_i}$ for all $i$. Thus our conjecture gives a formula for the $\p\un{n}_x^2$ (which currently have no other rigorous def\/inition\footnote{However see the last sentence of Remark~\ref{rem:generations}.}).

\item Recall that the exists a bijection between two-sided cells in the af\/f\/ine Weyl group and nilpotent orbits in the dual group~\cite{Lorbits}. Moreover, every two sided cell intersects $\f \Wa$ in a left cell (the canonical left cell)~\cite{LX}. The ``dif\/f\/icult'' elements $x_i$ for $i > 0$ all lie in the left cell corresponding to the minimal nilpotent orbit of $\SL_3$. This suggests that the problem of determining tilting characters should be related to the geometry of nilpotent orbits. Related results (connecting the $p$-canonical basis to coherent sheaves on the Springer resolution) may be found in \cite{AR, ARFM}.
\item After taking Remark~\ref{rem:growth} into account, our conjecture implies that the multiplicities of Weyl modules in the tilting module $T_{ 3p k \varpi_1}$ grow (at least) exponentially in $k$. Thus our conjecture implies that decomposition numbers for symmetric groups $S_n$ grow exponentially in $n$ (see~\cite{DonkinTilting,Erdmann} for the connection between tilting module characters and decomposition numbers), and that the dimension of the tilting module $T_{k\varpi_1}$ grows exponentially in $k$. Neither of these statements is true for $\SL_2$ or for the quantum group of~$\SL_3$ at a $p^{\rm th}$ root of unity.
\item The powers of $v$ which occur after $n$ iterations of the algorithm of Section~\ref{sec:wallbilliards} are all at least~$n$. (By contrast, the algorithm Section~\ref{sec:alcovebilliards} only ever changes the power of~$v$ by~$\pm 1$.) This implies (assuming our conjecture) that $\p\un{n}_{x_i}$ involves arbitrarily high powers of~$v$ as~$i$ grows. This in turn implies that certain structure constants for the action of $\un{h}_{s_i}$ on the $p$-canonical basis involve arbitrarily high powers of $v$. One can use this observation (and \cite[Section~1.4]{RW}) to conclude that the analogue of the $a$-function for the $p$-canonical basis for the Hecke algebra $\He$ of $\Wa$ is unbounded.
\item Fix $m = \big(\mu,n\big(v^k\big)\big) \in \widetilde{Z}$ and let $x$, $x'$ index the two alcoves contained in $B_\mu$, chosen such that $\RC(x_n) \subset \RC(x)$. The formula for $\p\zeta_i$ above implies that $m$ contributes $\varphi\big(v^k\big)$ to the coef\/f\/icient of $\un{n}_{x}$ (resp.~$\un{n}_{x'}$, $\un{n}_{x}$) in $\p\zeta_n$ (resp.~$\p\zeta_{n+1}$, $\p\zeta_{n+2}$). A key step in arriving at the multiset $\widetilde{Z}$ is to observe that the elements of the $p$-canonical basis $\{ \p\un{n}_{x_i} \}$ may be decomposed into such ``triples''. After a f\/irst version of this paper was written, L.T.~Jensen~\cite{JensenStar} has proved that such a decomposition is always possible, as a consequence of more general results on the $p$-canonical basis and the star operations of Kazhdan and the f\/irst author.
 \end{enumerate}
\end{Remark}

Finally, let us explain how to go from the $p$-canonical basis to a~picture similar to that at the beginning of this paper (which is conjecturally described by $\widetilde{Z}$). It was this procedure (combined with heuristics as to what constitutes generation $2$) that led us to our conjecture.

Fix an alcove $y\AC_0$ (for $y \in \fW$) which is not on a wall. For all $i \ge 0$ such that $\p n_{y,x_i} \ne 0$, we consider the polynomial obtained from $\p n_{y,x_i}$ by discarding negative powers of $v$, and write~$i(f)$ in the alcove $y\AC_0$. This produces a diagram, in which the strictly dominant alcoves are decorated by symbols of the form $i(f)$.\footnote{For an example of such a diagram for $p = 5$ see \url{http://www.maths.usyd.edu.au/u/geordie/pCanA2/p5pretriples.pdf}.} Now, for all $\mu \in \Chi_{++}$ we replace each ``triple'' of the form
\begin{gather*}\begin{split}&
 \begin{tikzpicture}[yscale=-0.6, xscale=-0.6, point/.style={inner
 sep=0pt,circle,fill,minimum size=0mm}]
\node (x) at (0,0) [point] {};
\node (y) at (60:4) [point] {};
\node (u) at (4,0) [point] {};
\begin{scope}[shift={(60:4)}]
\node (z) at (4,0) [point] {};
\end{scope}
\draw (x) -- (y) -- (z);
\draw (x) -- (u) -- (z);
\draw (u) -- (y);
\node (a) at (35:5) {\tiny $i(f), (i+2)(f)$};
\node (b) at (30:2) {\tiny $(i+1)(f)$};
 \end{tikzpicture}\end{split}
\qquad \text{or} \qquad \begin{split}&
 \begin{tikzpicture}[yscale=-0.6, xscale=-0.6, point/.style={inner
 sep=0pt,circle,fill,minimum size=0mm}]
\node (x) at (0,0) [point] {};
\node (y) at (60:4) [point] {};
\node (u) at (4,0) [point] {};
\begin{scope}[shift={(60:4)}]
\node (z) at (4,0) [point] {};
\end{scope}
\draw (x) -- (y) -- (z);
\draw (x) -- (u) -- (z);
\draw (u) -- (y);
\node (a) at (20:1.9) {\tiny $i(f), (i+2)(f)$};
\node (b) at (30:5) {\tiny $(i+1)(f)$};
 \end{tikzpicture}\end{split}
\end{gather*}
by
\begin{gather*}
 \begin{tikzpicture}[yscale=-0.6, xscale=-0.6, point/.style={inner
 sep=0pt,circle,fill,minimum size=0mm}]
\node (x) at (0,0) [point] {};
\node (y) at (60:4) [point] {};
\node (u) at (4,0) [point] {};
\begin{scope}[shift={(60:4)}]
\node (z) at (4,0) [point] {};
\end{scope}
\draw (x) -- (y) -- (z);
\draw (x) -- (u) -- (z);
\draw (u) -- (y);
\node (a) at (30:5) {\tiny $i(f)$};
 \end{tikzpicture}
\end{gather*}
This is what is depicted (for a second generation version) in Fig.~\ref{fig:p5}. 

\subsection*{Acknowledgements}
We would like to thank the anonymous referees for their comments.

\pdfbookmark[1]{References}{ref}
\LastPageEnding

\end{document}